\DeclareMathOperator{\add}{add}
\DeclareMathOperator{\op}{op}
\DeclareMathOperator{\soc}{Soc}
\DeclareMathOperator{\topp}{Top}
\DeclareMathOperator{\length}{l}
\DeclareMathOperator{\start}{s}
\DeclareMathOperator{\target}{t}
\DeclareMathOperator{\fin}{findim}
\DeclareMathOperator{\prj}{P}
\newcommand{\fidim}{\mbox{\rm{$\phi$dim}}}
\newcommand{\psidim}{\mbox{\rm{$\psi$dim}}}
\def\mod{\mbox{\rm{mod}}}
\def\ind{\mbox{\rm{ind}}}
\def\add{\mbox{\rm{add}}}
\def\pd{\mbox{\rm{pd}}}
\def\id{\mbox{\rm{id}}}
\def\gd{\mbox{\rm{gldim}}}
\def\rk{\hbox{\rm{rk}}}
\begin{document}
\newcommand{\mono}[1]{%
\gdef\puA{#1}}
\newcommand{\puA}{}
\newcommand{\faculty}[1]{%
\gdef\puC{#1}}
\newcommand{\puC}{}
\newcommand{\facultad}[1]{%
\gdef\puD{#1}}
\newcommand{\puD}{}
\newcommand{\N}{\mathbb{N}}
\newcommand{\Z}{\mathbb{Z}}
\newcommand{\gl}{\mbox{\rm{gldim}}}
\newtheorem{teo}{Theorem}[section]
\newtheorem{prop}[teo]{Proposition}
\newtheorem{lema}[teo] {Lemma}
\newtheorem{ej}[teo]{Example}
\newtheorem{obs}[teo]{Remark}
\newtheorem{defi}[teo]{Definition}
\newtheorem{coro}[teo]{Corollary}
\newtheorem{nota}[teo]{Notation}

\newenvironment{note}{\noindent Notation: \rm}



\title[Igusa-Todorov $\phi$ function for truncated path algebras]{Igusa-Todorov $\phi$ function for truncated path algebras}

\author[Barrios]{Marcos Barrios}
\address{Universidad de La Rep\'ublica,
Uruguay} \email{marcosb@fing.edu.uy}

\author[Mata]{Gustavo Mata}
\address{Universidad de La Rep\'ublica,
Uruguay} \email{gmata@fing.edu.uy}

\author[Rama]{Gustavo Rama}
\address{Universidad de la República\\
Uruguay} \email{grama@fing.edu.uy}

\subjclass[2010]{Primary 16W50, 16E30. Secondary 16G10}
\keywords{Igusa-Todorov function, truncated path algebra, radical square zero algebra}
\maketitle

\begin{abstract}
Given a truncated path algebra $A=\frac{\Bbbk Q}{J^k}$ we prove that $\fidim A = \fidim A^{\op}$.
We also compute the $\phi$-dimension of $A$ in function of the $\phi$-dimension of $\frac{\Bbbk Q}{J^2}$ when $Q$ has no sources nor sinks.
This allows us to bound the $\phi$-dimension for truncated path algebras. Finally, we characterize $A$ when its $\phi$-dimension is equal to $1$.
\end{abstract} 

\section{Introduction}

One of the most important conjectures in the Representation Theory of Artin Algebras is the finitistic conjecture. It states that $\sup\{\pd(M): M$ is a finitely generated module of finite projective dimension$\}$  is finite. In an attempt to prove the conjecture Igusa and Todorov defined in  \cite{IT} two functions from the objects of $\mod A$ (the category of right finitely generated modules over an Artin algebra $A$) to the natural numbers, which generalizes the notion of projective dimension. Nowadays they are known as the Igusa-Todorov functions, $\phi$ and $\psi$. One of its nicest features is that they are finite for each module, and allow us to define the $\phi$-dimension and the $\psi$-dimension of an algebra. These are new homological measures in the module category. In particular it holds that
\[\fin(A)\leq \fidim(A) \leq \psidim(A)\leq\gl(A),\]
and they all agree in the case of algebras with finite global dimension.\\
Recently, various works were dedicated to study and generalize the properties of these functions. See for instance \cite{HL}, \cite{LMM}, \cite{kn: LM}. In \cite{LMM} the authors compute $\phi$ and $\psi$ for radical square zero algebras and their respective dimensions. In this article we generalize some of those results.\\ 
This article is organized as follows: the introduction and the preliminary section are devoted to fixing the notation and recalling the basic facts needed in this work. In section $3$ we show how to construct radical square zero algebras with maximal $\phi$-dimension.
In section $4$ we work with truncated path algebras. We generalize some results for the $\phi$-dimension made in \cite{LMM} for radical square zero algebras to truncated path algebras. We compute the $\phi$-dimension explicitly for algebras such that its quiver has neither sinks nor soureces, we give bounds for the $\phi$-dimension and give a proof that the $\phi$-dimension of an algebra and its opposite are equal. Finally, in section $5$ we give a characterization of truncated path algebras with $\phi$-dimension $1$.

\begin{footnotesize}
Corresponding author: Gustavo Mata.
\end{footnotesize}

\section{preliminaries}

Let $A$ be a finite dimensional basic algebra defined over a field $\Bbbk$. The category of finite dimensional right $A$-modules will be denoted by $\mod A$, the indecomposable modules of $A$ by $\ind A$, and the set of isoclasses of simple $A$-modules by $\mathcal{S} (A)$. If $S \in \mathcal{S} (A)$, $\prj(S)$ denotes the indecomposable projective associated to $S$. For a $A$-module $M$ we denote by $\soc(M)$ its socle and by $\topp(M)$ its top.\\ 
Given an $A$-module $M$ we denote its projective dimension by $\pd(M)$, its injective dimension by $\id(M)$ and the $n^{th}$-syzygy by $\Omega^n(M)$. We recall that the global dimension of $A$, which we denote by $\gd(A)$, is the supremum  of the set of projective dimensions of $A$-modules. The global dimension can be a natural number or infinity. The finitistic dimension of $A$, denoted by $\fin(A)$, is the supremum of the set of projective dimensions of $A$-modules with finite projective dimension.\\
If $Q = (Q_0,Q_1,\start,\target)$ is a finite connected quiver, $\mathfrak{M}_Q$ denotes its adjacency matrix and $\Bbbk Q$ its path algebra. Given $\rho$ a path in $\Bbbk Q$, $\length(\rho)$, $\start(\rho)$ and $\target(\rho )$ denote the length, start and target of $\rho$ respectively. $C^n$ denotes the oriented cycle graph with $n$ vertices.   

\subsection{Igusa-Todorov $\phi$ function} We recall the definition of the Igusa-Todorov $\phi$ function and some basic properties. We also define the $\phi$-dimension of an algebra.

\begin{defi}
Let $K_0(A)$ be the abelian group generated by all symbols $[M]$, where M is a f.g.
$A$-module, modulo the relations
\begin{enumerate}
  \item $[M]-[M']-[M'']$ if  $M \cong M' \oplus M''$,
  \item $[P]$ for each projective.
\end{enumerate}
\end{defi}

\noindent Let $\bar{\Omega}: K_0 (A) \rightarrow K_0 (A)$ be the group endomorphism induced by $\Omega$, and let $K_i (A) = \bar{\Omega}(K_{i-1}(A))= \ldots = \bar{\Omega}^{i}(K_{0} (A))$. Now, if $M$ is a finitely generated $A$-module then $\langle \add M\rangle$ denotes the subgroup of $K_0 (A)$ generated by the classes of indecomposable summands of $M$.

\begin{defi}\label{monomorfismo}
The \textbf{(right) Igusa-Todorov function} $\phi$ of $M\in \mod A$  is defined as 
\[\phi_{A}(M) = \min\left\{l: \bar{\Omega}_{A}|_{{\bar{\Omega}_{A}}^{l+s}\langle add M\rangle}\text{ is a monomorphism for all }s\in \mathbb{N}\right\}.\]
\end{defi}

In case that there is no possible misinterpretation we denote by $\phi$ the Igusa-Todorov function $\phi_A$.

\begin{prop}[\cite{IT}, \cite{HLM}] \label{it1} \label{Huard1}
Given $M,N\in$ mod$A$. 

\begin{enumerate}
  \item $\phi(M) = \pd (M)$ if $\pd (M) < \infty$.
  \item $\phi(M) = 0$ if $M \in \ind A$ and $\pd(M) = \infty$.
  \item $\phi(M) \leq \phi(M \oplus N)$.
  \item $\phi\left(M^{k}\right) = \phi(M)$ for $k \in \mathbb{N}$.
  \item If $M \in \mod A$, then $\phi(M) \leq \phi(\Omega(M))+1$.
\end{enumerate}
\begin{proof}
For the statements 1-4 see \cite{IT}, and for 5 see \cite{HLM}.
\end{proof}
\end{prop}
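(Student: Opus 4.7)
The plan is to work directly from the definition, exploiting two structural facts. First, $K_0(A)$ is free abelian on the isomorphism classes of indecomposable non-projective modules, so every subgroup is torsion-free. Second, for any finitely generated $M$ the group $\langle\add M\rangle$ is a finitely generated free abelian group, and the sequence of subgroups $\bar\Omega^l\langle\add M\rangle$ has nonincreasing ranks in $\mathbb{N}$, so it stabilizes; a Fitting-type argument on that stable subgroup shows $\bar\Omega$ becomes injective from some step on, so $\phi(M)$ is well-defined. Both statements (3) and (4) fall out immediately from this setup: $\langle\add M\rangle\subseteq\langle\add(M\oplus N)\rangle$ and $\langle\add M^k\rangle=\langle\add M\rangle$, and injectivity of $\bar\Omega$ on a group restricts to injectivity on subgroups.

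For (1), if $\pd(M)=n<\infty$, write $M=\bigoplus M_i^{a_i}$ with $M_i$ indecomposable. Each $\Omega^n(M_i)$ is projective, so $\bar\Omega^l\langle\add M\rangle=0$ for $l\geq n$, giving $\phi(M)\leq n$. Conversely, an indecomposable summand $M_0$ of $M$ with $\pd(M_0)=n$ has $[\Omega^{n-1}(M_0)]\neq 0$ but $\bar\Omega[\Omega^{n-1}(M_0)]=0$, so $\bar\Omega$ fails to be injective on $\bar\Omega^{n-1}\langle\add M\rangle$, forcing $\phi(M)\geq n$. For (2), when $M$ is indecomposable of infinite projective dimension the subgroup $\langle\add M\rangle=\mathbb{Z}[M]$ is cyclic, and each $\bar\Omega^s[M]=[\Omega^s(M)]$ is nonzero because minimal syzygies never become projective; since $K_0(A)$ is torsion-free, $\bar\Omega$ restricted to any such cyclic subgroup is automatically injective, yielding $\phi(M)=0$.

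For (5), the key observation is that $\bar\Omega\langle\add M\rangle\subseteq\langle\add\Omega(M)\rangle$: every indecomposable summand of $\Omega(M_i)$ appears as a summand of $\Omega(M)$ for $M_i$ an indecomposable summand of $M$. Iterating gives
\[\bar\Omega^{l+1+s}\langle\add M\rangle=\bar\Omega^{l+s}\bigl(\bar\Omega\langle\add M\rangle\bigr)\subseteq\bar\Omega^{l+s}\langle\add\Omega(M)\rangle,\]
so taking $l=\phi(\Omega(M))$ and using that injectivity on the larger subgroup restricts to injectivity on the smaller one yields $\phi(M)\leq\phi(\Omega(M))+1$.

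The only delicate point is (2): one must check not just that $[\Omega^s(M)]\neq 0$, but that no nontrivial integer multiple of it lies in $\ker\bar\Omega$. This is where torsion-freeness of $K_0(A)$ is essential, together with the fact that $\Omega^{s+1}(M)$ retains at least one non-projective indecomposable summand at every step. Everything else is routine manipulation of the subgroups $\bar\Omega^l\langle\add M\rangle$ and the definition of $\bar\Omega$; no representation-theoretic input beyond the existence of minimal projective resolutions is required.
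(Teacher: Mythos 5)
Your proof is correct, but note that the paper does not actually prove this proposition: its ``proof'' is only a citation to \cite{IT} for items (1)--(4) and to \cite{HLM} for item (5). What you have done is reconstruct, from the definition of $\phi$ as given in the paper, essentially the original arguments of those references: the rank-stabilization (Fitting-type) argument that makes $\phi$ well defined; the inclusions $\langle\add M\rangle\subseteq\langle\add(M\oplus N)\rangle$ and $\langle\add M^{k}\rangle=\langle\add M\rangle$ for (3) and (4); the two-sided bound for (1), where $[\Omega^{n-1}(M_0)]\neq 0$ but $\bar{\Omega}[\Omega^{n-1}(M_0)]=0$; the torsion-freeness argument on the infinite cyclic group $\mathbb{Z}[M]$ for (2), which is indeed the delicate point you flag; and the key inclusion $\bar{\Omega}\langle\add M\rangle\subseteq\langle\add\Omega(M)\rangle$ for (5), which is precisely the mechanism of the proof in \cite{HLM}. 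Two points are worth making explicit in your write-up: everything silently relies on \emph{minimal} projective resolutions (this is what makes $\bar{\Omega}$ well defined on $K_0(A)$, guarantees that syzygies of a module of infinite projective dimension are never projective, and gives the summand relation $\Omega(M_i)\mid\Omega(M)$ underlying your inclusion in (5)); and in (1) the trivial case $\pd(M)=0$, where $\langle\add M\rangle=0$, should be noted separately since your lower-bound argument needs $n\geq 1$. Neither is a gap; your argument is a correct, self-contained substitute for the paper's citations.
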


\begin{defi} The \textbf{$\phi$-dimension} of an algebra $A$ is defined as
$$\fidim(A) = \sup\{\phi(M): M\in modA\}.$$
\end{defi}

\begin{teo}[\cite{HL}] \label{Phi = 0}

If $A$ is an Artin algebra then $A$ is self-injective if and only if $\fidim (A) = 0$.

\end{teo}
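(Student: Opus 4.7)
The plan splits into two directions, with the reverse (harder) implication argued by contradiction.

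For the direction $A$ self-injective $\Rightarrow \fidim(A)=0$: in a self-injective algebra the syzygy operator $\Omega$ induces an auto-equivalence of the stable category $\underline{\mod}\,A$, with quasi-inverse the cosyzygy functor. In particular $\Omega$ preserves indecomposability of non-projective modules and gives a bijection on their isoclasses. Since $K_0(A)$ is freely generated as an abelian group by these isoclasses, the induced map $\bar\Omega:K_0(A)\to K_0(A)$ is a group automorphism, hence injective on every subgroup, and in particular on every $\bar\Omega^{l+s}\langle\add M\rangle$. Thus $\phi(M)=0$ for all $M\in\mod\,A$, giving $\fidim(A)=0$.

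For the converse, assume $\fidim(A)=0$. Every finitely generated subgroup of $K_0(A)$ lies in some $\langle\add M\rangle$ (take $M$ to be the direct sum of its generators), and the condition $\phi(M)=0$ forces $\bar\Omega|_{\langle\add M\rangle}$ to be injective; hence $\bar\Omega$ is injective on all of $K_0(A)$. Suppose for contradiction that $A$ is not self-injective, so that there exists an indecomposable injective $I$ which is not projective. The goal is to exhibit a nonzero element of $\ker\bar\Omega$. I would start from the socle sequence $0\to\soc(I)\to I\to I/\soc(I)\to 0$: applying the horseshoe lemma to the projective covers of $\soc(I)$ and $I/\soc(I)$ yields a projective presentation of $I$ whose kernel $K$ is an extension of $\Omega(I/\soc(I))$ by $\Omega\soc(I)$, and Schanuel's lemma relates $[K]$ with $[\Omega I]$ in $K_0(A)$.

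The main obstacle is that the horseshoe extension $0\to\Omega\soc(I)\to K\to\Omega(I/\soc(I))\to 0$ need not split, so the relation it produces in $K_0(A)$ (where only direct-sum identifications are imposed) is more delicate than the naive $[K]=[\Omega\soc(I)]+[\Omega(I/\soc(I))]$. Handling this nonsplitting carefully is the crux. One route is to pass to the Nakayama functor $\nu=D\mathrm{Hom}(-,A)$: the identity $\nu^{-1}(I)=\prj(\soc(I))$ and the fact that $\prj(\soc(I))\not\cong I$ whenever $A$ fails to be self-injective should produce the required linear dependence among syzygy classes, yielding the kernel element that contradicts injectivity of $\bar\Omega$ and completes the proof.
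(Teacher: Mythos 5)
The paper itself gives no proof of this statement: it is quoted directly from \cite{HL}, and later invoked again (as ``Corollary 6 from \cite{HL}'') inside Proposition \ref{autoinyectiva}. So the comparison here is between your attempt and the result it would need to reprove from scratch. Your easy direction is correct and complete: for a self-injective algebra, minimal syzygies of modules without projective summands again have no projective summands, $\Omega$ induces an equivalence of the stable category, hence $\bar{\Omega}$ permutes the free basis of $K_0(A)$ and is an automorphism, so $\phi\equiv 0$. Your reduction of the converse is also correct: since every finitely generated subgroup of $K_0(A)$ lies in some $\langle \add M\rangle$, $\fidim(A)=0$ is equivalent to injectivity of $\bar{\Omega}$ on $K_0(A)$, so it suffices to exhibit a nonzero element of $\ker \bar{\Omega}$ when $A$ is not self-injective.

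The genuine gap is that this kernel element is never produced, and that is where the whole content of the theorem lies. The Schanuel comparison you propose is vacuous in $K_0(A)$: if $K$ is the kernel of the horseshoe presentation of $I$ and $\Omega I$ the minimal syzygy, Schanuel gives $K\oplus \prj(I)\cong \Omega I\oplus \prj(\soc I)\oplus \prj(I/\soc I)$, which in $K_0(A)$ (where projectives vanish) reads $[K]=[\Omega I]$ --- a tautology valid for any module and any presentation, producing no relation at all. A nonzero kernel element would follow if one knew $K\cong \Omega(\soc I)\oplus \Omega(I/\soc I)$ up to projectives, since then $\bar{\Omega}\left([\soc I]+[I/\soc I]-[I]\right)=0$ while $[\soc I]+[I/\soc I]-[I]\neq 0$ for $I$ indecomposable, non-simple and non-projective; but this is exactly the splitting of the horseshoe extension, which you correctly identify as unavailable and then do not circumvent. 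The Nakayama-functor observation ($\nu^{-1}(I)\cong \prj(\soc I)\not\cong I$) singles out a projective not isomorphic to $I$, but no argument links it to syzygy classes. Moreover, the setup itself degenerates in relevant cases: for $A=\Bbbk(1\to 2)$, which is not self-injective, the unique non-projective indecomposable injective is the simple $S_1$, so $\soc I=I$ and your socle sequence is trivial; there the kernel element is $[S_1]$ itself, because $\Omega S_1$ is projective (finite projective dimension), a mechanism your outline does not cover. What is actually needed is a construction, valid for every non-self-injective Artin algebra, of modules $X,Y$ with $[X]\neq [Y]$ in $K_0(A)$ but $[\Omega X]=[\Omega Y]$ (for instance, non-isomorphic quotients of a projective by isomorphic submodules of its radical, as in $A=\Bbbk\langle x,y\rangle/(x,y)^2$ with $X=A/(x)$, $Y=A/(y)$, $\Omega X\cong\Omega Y\cong S$), and your proposal stops short of providing it. As it stands, the hard implication $\fidim(A)=0\Rightarrow A$ self-injective remains unproven.
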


\subsection{Radical square zero algebras}
Given a quiver Q we denote by  $J$  the ideal generated by the arrows in $\Bbbk Q$.
By a radical square zero algebra we mean an algebra which is isomorphic to an algebra of the type $A=\frac{\Bbbk Q}{J^2}$.
If $\mathcal{S}({A}) = \left\{S_1, \ldots, S_n\right\}$ denotes a complete set of simple $A$-modules up to isomorphism, then we divide the set $\mathcal{S}(A)$ in the three distinct sets

\begin{itemize}
  \item $\mathcal{S}_I$ the set of injective modules in  $\mathcal{S}$,
  \item $\mathcal{S}_P$ the set of projective modules in $\mathcal{S}$,
  \item $\mathcal{S}_D = \mathcal{S}\setminus (\mathcal{S}_I \cup \mathcal{S}_P)$.
\end{itemize}

\begin{obs} \label{fgd}
For radical square zero algebras it holds that  
$\displaystyle \Omega (S_i) = \bigoplus_{\alpha:i\rightarrow j} S_j$, i.e.: $\Omega (S_i)$ is a direct sum  of simple modules, and the number of summands isomorphic to $S_j$ coincides with the number of arrows starting in the vertex $i$ and ending at the vertex $j$. 
Given a radical square zero algebra $A$ with $n$ vertices and finite global dimension, is easy to compute its global dimension using the last fact. Explictly we have
\[ \gd(A) = \sup\left\{\length(\rho): \start(\rho)\text{ is a source, and }\target(\rho)\text{ is a sink}\right\},\]
conluding that $\gd(A)$ must be less or equal to $n-1$.
\end{obs}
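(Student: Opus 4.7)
The remark has two claims: a formula for $\Omega(S_i)$, and the description of $\gd(A)$ together with the bound $\gd(A)\le n-1$. I would handle them in that order.

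For the first claim, the key input is that $A=\Bbbk Q/J^2$ has $J^2=0$, so $\operatorname{rad}(A)^2=0$ and every radical of a module is semisimple. The projective cover of $S_i$ is $P(i)=e_iA\to S_i$, and its kernel is $\operatorname{rad}(P(i))=e_iJ/e_iJ^2=e_iJ$. Since $e_iJ$ is spanned by the arrows $\alpha: i\to j$, and each such arrow generates a copy of $S_j$ (as the right $A$-action of $J$ on it is zero), we get $\Omega(S_i)=\bigoplus_{\alpha:i\to j}S_j$, with multiplicities counting parallel arrows.

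For the second claim, I would first reduce to computing projective dimensions of simples: since every module $M$ over a radical square zero algebra fits in $0\to \operatorname{rad}(M)\to M\to M/\operatorname{rad}(M)\to 0$ with both ends semisimple, one has $\gd(A)=\sup_{S\in\mathcal{S}(A)}\pd(S)$. Now observe that if $i$ is a sink then $P(i)=S_i$, so $\pd(S_i)=0$; otherwise part (1) gives $\pd(S_i)=1+\max_{\alpha:i\to j}\pd(S_j)$, which by an easy induction on paths equals the length of the longest path in $Q$ starting at $i$ and terminating at a sink (finiteness of all these values is guaranteed by $\gd(A)<\infty$). Next, since any path from $i$ can be extended backwards through any incoming arrow, we have $\pd(S_i)\le \pd(S_j)$ whenever there is an arrow $j\to i$; iterating, the maximum of $\pd(S_i)$ is attained at some vertex $j$ with no incoming arrows, i.e.\ at a source. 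This yields the displayed formula for $\gd(A)$.

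Finally, for the bound $\gd(A)\le n-1$, I would argue that finite global dimension forces $Q$ to be acyclic: an oriented cycle through a vertex $i$ would make $S_i$ appear as a summand of $\Omega^{\ell}(S_i)$ for $\ell$ the length of the cycle, forcing $\pd(S_i)=\infty$. Once $Q$ is acyclic, every path visits each vertex at most once, so has length at most $n-1$, and the formula gives $\gd(A)\le n-1$. No step here is truly difficult; the only place one has to be a little careful is the reduction from arbitrary modules to simples and the verification that the inductive formula for $\pd(S_i)$ terminates precisely at the sinks, both of which rely crucially on the hypothesis $J^2=0$.
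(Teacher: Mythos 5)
Your proposal is correct, and it follows exactly the route the paper has in mind: the paper states this as a remark without proof, asserting that the global dimension formula is ``easy to compute'' from the syzygy formula $\Omega(S_i)=\bigoplus_{\alpha:i\to j}S_j$, which is precisely the reduction you carry out (projective covers $e_iA$ with radical $e_iJ$ semisimple since $J^2=0$, then $\gd(A)=\sup_S\pd(S)$, the inductive formula $\pd(S_i)=1+\max_{\alpha:i\to j}\pd(S_j)$, maximality at sources, and acyclicity forced by finite global dimension). Your elaboration fills in all the details the paper leaves implicit, and each step is sound.
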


\begin{obs}

For a radical square zero algebra $A = \frac{\Bbbk Q}{J^2}$, $\Omega(M)$ is a semisimple $A$-module for every $A$-module $M$, and $K_1 (A)$ has basis $\left\{[S]_{S \in \mathcal{S}_D}\right\}$.
In particular if $Q$ has no sinks nor sources, then $K_1 (A)$ has basis $\{[S]_{S \in \mathcal{S}}\}$.

\end{obs}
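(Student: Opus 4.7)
The statement decomposes into two independent assertions that I establish in sequence. For the first (semisimplicity of $\Omega(M)$), the plan is to take a projective cover $P(M) \twoheadrightarrow M$ and observe that $\Omega(M) \subseteq P(M) \cdot J$, since the cover induces an isomorphism on tops. Because $J^2 = 0$ in $A$, the module $P(M) \cdot J$ is annihilated by $J$ and hence is semisimple; its submodule $\Omega(M)$ is then semisimple as well.

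For the basis of $K_1(A)$, I would use that $K_0(A) = \bigoplus_{N \in \ind A,\, N \text{ non-proj}} \mathbb{Z}[N]$, so in particular the classes $\{[S_j] : S_j \notin \mathcal{S}_P\}$ are $\mathbb{Z}$-linearly independent. The containment $K_1(A) \subseteq \bigoplus_{S \in \mathcal{S}_D} \mathbb{Z}[S]$ follows because $K_1(A)$ is generated by the elements $[\Omega(M)]$, each of which, by the first part, expands as a $\mathbb{Z}$-linear combination of classes of simples; moreover a simple $S_j$ can appear as a summand of $\Omega(M) \subseteq P(M) \cdot J$ only if there is an arrow into $j$ (equivalently $S_j \notin \mathcal{S}_I$), and the relation $[S] = 0$ for $S \in \mathcal{S}_P$ discards the projective simples. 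For the reverse inclusion, given $S_j \in \mathcal{S}_D$ I pick an arrow $\alpha \colon i \to j$ (available since $j$ is not a source) and let $M_\alpha$ be the indecomposable two-layer module obtained from $P(S_i)$ by quotienting out every simple summand of $P(S_i) \cdot J$ other than the one corresponding to $\alpha$. Then $\Omega(M_\alpha) = S_j$, so $[S_j] = \bar{\Omega}([M_\alpha]) \in K_1(A)$.

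The ``in particular'' clause is then immediate: the absence of sinks and sources forces $\mathcal{S}_I = \mathcal{S}_P = \emptyset$, hence $\mathcal{S}_D = \mathcal{S}$. The step I expect to require the most care is the surjectivity direction above: one must produce, for each $S_j \in \mathcal{S}_D$, an actual module whose syzygy is \emph{exactly} $S_j$ rather than a larger semisimple. The radical square zero hypothesis is crucial here, as it forces $P(S_i) \cdot J$ to split as a direct sum of simples indexed by the arrows out of $i$, so that all summands except the one corresponding to $\alpha$ can be truncated.
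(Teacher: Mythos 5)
Your overall decomposition is sound, and since the paper offers no proof of this remark there is nothing to compare against route-wise; however, the final construction in the surjectivity direction --- precisely the step you flagged as the delicate one --- is backwards. If $N \subseteq P(S_i)\cdot J$ is the submodule you quotient out, then the quotient map $P(S_i) \to P(S_i)/N$ is a projective cover (its kernel lies in the radical and the top stays $S_i$), so $\Omega\bigl(P(S_i)/N\bigr) = N$: the syzygy is the submodule you \emph{kill}, not the layer you \emph{keep}. Hence your $M_\alpha = P(S_i)/\bigl(\bigoplus_{\beta \neq \alpha} \beta A\bigr)$ has $\Omega(M_\alpha) \cong \bigoplus_{\beta \neq \alpha,\, s(\beta) = i} S_{t(\beta)}$, which omits $S_j$ entirely and in general is not even a single simple. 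Concretely, for a vertex $1$ with two outgoing arrows $\alpha \colon 1 \to 2$ and $\beta \colon 1 \to 3$, your $M_\alpha$ is $P(S_1)/\beta A$ and $\Omega(M_\alpha) \cong S_3$, not $S_2$.

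The repair is immediate: take the complementary quotient $M = P(S_i)/\alpha A$, where $\alpha \colon i \to j$ is the chosen arrow into $j$. Since $J^2 = 0$, one has $\alpha A \cong S_{t(\alpha)} = S_j$, and then $\Omega(M) = \alpha A \cong S_j$, giving $[S_j] = \bar{\Omega}([M]) \in K_1(A)$ as desired. With this single change the argument is complete and correct: the semisimplicity of $\Omega(M)$ via $\Omega(M) \subseteq P(M)\cdot J$ and $J^2 = 0$ is right; the containment $K_1(A) \subseteq \langle [S] : S \in \mathcal{S}_D \rangle$ is right (every simple summand of $P(M)\cdot J$ is the target of an arrow, hence non-injective, and projective simples vanish in $K_0(A)$); and linear independence follows, as you say, from the freeness of $K_0(A)$ on the classes of non-projective indecomposables.
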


\begin{defi}

Let $ A = \frac{\Bbbk Q}{J^2}$ be a radical square zero algebra, where $Q$ is finite with $n$ vertices and without sources nor sinks. We define $T:\mathbb{Q}^n \rightarrow \mathbb{Q}^n$ as the linear transformation given by $T(e_i) =  \sum_{j = 1}^n |\{\alpha:i\rightarrow j\}|e_j$.  

\end{defi}

\begin{obs}
Given a radical square zero algebra $A$ with $Q$ as the previous definition, the matrix of $T$ in the canonical basis and the matrix of $\bar{\Omega}|_{K_1(A)}$ in the canonical basis $\{[S]_{S \in \mathcal{S}}\}$ agree.
\end{obs}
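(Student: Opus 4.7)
The plan is to verify directly that, under the natural identification of the standard basis vector $e_i \in \mathbb{Q}^n$ with the class $[S_i] \in K_1(A)$, the two linear maps agree on each basis vector. First I would note that since $Q$ has no sources nor sinks every simple lies in $\mathcal{S}_D$, so by the previous remark $\{[S_1],\ldots,[S_n]\}$ is indeed a basis of $K_1(A)$; in particular the bijection $e_i \leftrightarrow [S_i]$ makes sense and both matrices are $n \times n$.

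Next, I would apply Remark \ref{fgd} to compute
\[
\Omega(S_i) \;=\; \bigoplus_{\alpha \colon i \to j} S_j.
\]
Taking classes in $K_0(A)$ and using the relation $[M' \oplus M''] = [M'] + [M'']$ gives
\[
\bar{\Omega}([S_i]) \;=\; \sum_{j=1}^{n} |\{\alpha \colon i \to j\}|\, [S_j],
\]
which matches the defining formula $T(e_i) = \sum_{j=1}^{n} |\{\alpha \colon i \to j\}|\, e_j$ coefficient by coefficient. Hence the $i$-th column of the matrix of $\bar{\Omega}|_{K_1(A)}$ in the basis $\{[S_j]\}$ coincides with the $i$-th column of the matrix of $T$ in the canonical basis of $\mathbb{Q}^n$, and the two matrices agree.

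There is essentially no obstacle here: the argument is a direct translation between the two descriptions of the same map, and the only point that requires care is to justify that $[S_1],\ldots,[S_n]$ really form a basis of $K_1(A)$ (so that the matrix of $\bar{\Omega}|_{K_1(A)}$ is unambiguously defined in the proposed basis). This is precisely where the hypothesis that $Q$ has no sources nor sinks enters, since without it some $[S_i]$ would vanish in $K_1(A)$ and the identification with $\mathbb{Q}^n$ would collapse.
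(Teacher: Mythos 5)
Your proposal is correct and matches the reasoning the paper itself relies on for this remark (which it leaves unproved): the syzygy formula of Remark \ref{fgd} gives $\bar{\Omega}([S_i]) = \sum_{j} |\{\alpha\colon i\to j\}|\,[S_j]$, and the absence of sources and sinks guarantees that $\{[S_i]\}$ is a basis of $K_1(A)$, so the two matrices coincide column by column. Nothing further is needed.
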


The following example, from \cite{LMM}, shows that for $A = \frac{\Bbbk Q}{J^2}$, a radical square zero algebra with $|Q_0| = n$, $\fidim(A)$ can be equal to  $m$ for all $m$ between $1$ and $n-1$.   

\begin{ej}\label{ejemplo_todas_las_dimensiones}
Let $n\in\N$ and $0<m\leq n$ consider $\Gamma^m$ the quiver

$$\xymatrix{  1 \ar@(d,r) \ar[r]& 2 \ar[r] & 3 \ar[r]& \ldots \ar[r]& m \ar@(d,r)\\
&&& m+1 \ar@(d,r) \ar[ur]&\\
&&& \vdots &\\
&&&n \ar@(d,r) \ar[uuur]}\ \ \ \ ,$$ 
\vspace{.5cm}

\noindent
that has $|\left(\Gamma^m\right)_0| = n$ and is without sinks nor sources. We have that
$$\fidim \left(\frac{\Bbbk \Gamma^m}{J^2}\right)= m-1.$$

\end{ej}

\section{Construction of radical square zero algebras with maximal $\phi$-dimension }

In \cite{LMM} the authors prove that a radical square zero algebra $A = \frac{\Bbbk Q}{J^2}$ has $\phi$-dimension less or equal to $n$ (Corollary 4.17) and if $\fidim (A) = n$ then $Q$ is a strongly connected quiver and the asociated Jordan form of the adjacency matrix of $Q$ is (Theorem 4.34)

 $$M_{\lambda} = \left(
    \begin{array}{ccccccc}
      0 & 0 & \ldots & 0 & 0 & 0 & 0\\
      1 & 0 & \ldots & 0 & 0 & 0 & 0\\
      0 & 1 &  & 0 & 0 & 0 & 0\\
      \vdots &  & \ddots & \ddots & \vdots & \vdots &\vdots \\
      0 & 0 & \ddots & 1 & 0 & 0 & 0\\
      0 & 0 & \ldots & 0 & 1 & 0 & 0\\
      0 & 0 & \ldots & 0 & 0 & 0 & \lambda\\
    \end{array}
  \right),$$
with $\lambda\geq2$ natural.

Given an invertible matrix $A$, let $N_\lambda^A$ be
$$N^A_{\lambda} = A M_{\lambda}A^{-1}.$$

We write $A$ and $A^{-1}$ as

$$\begin{array}{cc} 

\begin{array}{ccccc}

A & = & \left(
    \begin{array}{cccc}
      a_{11} & a_{12} & \ldots & a_{1n}\\
      a_{21} & a_{22} & \ldots & a_{2n}\\
      \vdots & \vdots & \ddots &\vdots \\
     a_{n1} & a_{n2} & \ldots & a_{nn}\\
    \end{array}
  \right)\ 
& = & \left( \begin{array}{cccc}
 v_1, & v_2, & \ldots, & v_n 
\end{array}\right), \\

& & & & \\

A^{-1}& = & \left(
    \begin{array}{cccc}
      b_{11} & b_{12} & \ldots & b_{1n}\\
      b_{21} & b_{22} & \ldots & b_{2n}\\
      \vdots & \vdots & \ddots &\vdots \\
     b_{n1} & b_{n2} & \ldots & b_{nn}\\
    \end{array}
  \right) & = & \left( \begin{array}{c} 
  
 w_1 \\ w_2 \\ \vdots \\ w_n \end{array} \right). \\
\end{array} 
\end{array}
$$

We have $\langle v_i, w_{j} \rangle = \delta_{ij}$, as $A^{-1}A = Id_{n\times n}$.

We want to know when $N^{A}_{\lambda}$ is the adjacency matrix of a strongly connected quiver, i.e., when $N^{A}_{\lambda} \in M_{n\times n}(\N)$. It is clear that $(N^{A}_{\lambda})^n = A X_{\lambda} A^{-1}$ where

$$X_{\lambda} = \left(
    \begin{array}{ccccccc}
      0 & 0 & \ldots & 0 & 0 & 0 & 0\\
      0 & 0 & \ldots & 0 & 0 & 0 & 0\\
      0 & 0 &  & 0 & 0 & 0 & 0\\
      \vdots &  & \ddots & \ddots & \vdots & \vdots &\vdots \\
      0 & 0 & \ddots & 0 & 0 & 0 & 0\\
      0 & 0 & \ldots & 0 & 0 & 0 & 0\\
      0 & 0 & \ldots & 0 & 0 & 0 & {\lambda}^n\\
    \end{array}
  \right).$$

Then $(N^{A}_{\lambda})^n = {\lambda}^{n}A'A^{-1}$ where

$$A'= \left(
    \begin{array}{ccccc}
      0 & 0 & \ldots & 0 & a_{1n}\\
      0 & 0 & \ldots & 0 & a_{2n}\\
      \vdots & \vdots & \ddots & \vdots &\vdots \\
      0 & 0 & \ldots & 0 & a_{nn}\\
    \end{array}
  \right).$$

Then $(N^{A}_{\lambda})^n$ is the following matrix

$$(N^{A}_{\lambda})^n = {\lambda}^n\left(
    \begin{array}{cccc}
      a_{1n}b_{n1} & a_{1n}b_{n2} & \ldots & a_{1n}b_{nn}\\
      a_{2n}b_{n1} & a_{2n}b_{n2} & \ldots & a_{2n}b_{nn}\\
      \vdots & \vdots & \ddots &\vdots \\
      a_{nn}b_{n1} & a_{nn}b_{n2} & \ldots & a_{nn}b_{nn}\\
    \end{array}
  \right).$$

If $(N^{A}_{\lambda})$ is the adjacency matrix of a strongly connected quiver, given $i \in \{1, \ldots, n\}$ there exist $j \in \{1, \ldots, n\}$ such that $a_{in}b_{nj} > 0$, hence $a_{in} \neq 0$ for all $i \in \{1, \ldots, n\}$. Analogously $b_{nj} \neq 0$ for all $j \in \{1, \ldots, n\}$. We conclude that $a_{in}b_{nj} > 0$ for every $i,j \in \{1, \ldots, n\}$ and all the coefficients $a_{in}, b_{nj}$ can be choosen positive. 

Now we are able to give a procedure to construct every radical square zero algebra with maximal $\phi$-dimension.

\underline{Step 1} Choose $\tilde{v}_n$ and $\tilde{w}_n$ positive vectors of $\mathbb{Q}^{n}$ such that $\langle \tilde{v}_n,\tilde{w}_n\rangle = 1$. 

\underline{Step 2} Choose $\{\tilde{v}_1, \ldots, \tilde{v}_{n-1}\}$ a basis for $\tilde{w}_{n}^{\perp}$. Let $\tilde{A}$ be the matrix

$$\tilde{A} = \left( \begin{array}{cccc}
 \tilde{v}_1, & \tilde{v}_2, & \ldots, & \tilde{v}_n 
\end{array}\right). $$

And let $\tilde{A}^{-1}$ be
$$\tilde{A}^{-1} = \left( \begin{array}{c} 
 \tilde{w}_1 \\ \tilde{w}_2 \\ \vdots \\ \tilde{w}_n    
\end{array} \right).$$

\underline{Step 3} Choose $\tilde{\lambda} \in \mathbb{N}$ such that $\tilde{A} M_{\tilde{\lambda}} \tilde{A}^{-1} \in M_{n\times n}(\mathbb{Q} \setminus \mathbb{Q}^{-})$. The last fact is possible because $\tilde{A} M_{\tilde{\lambda}} \tilde{A}^{-1} = \tilde{A} M_{0} \tilde{A}^{-1} + \tilde{\lambda} \tilde{A} X_{1} \tilde{A}^{-1}$ and $\tilde{A} X_{1} \tilde{A}^{-1}$ is a positive matrix.

\underline{Step 4} Consider $m \in \mathbb{N}\backslash\{0\}$ such that $m\tilde{A} M_{\tilde{\lambda}} \tilde{A}^{-1} \in M_{n\times n}(\mathbb{Z}\setminus \mathbb{Z}^-)$, then there exist a quiver $Q$ such that $\mathfrak{M}_Q = \tilde{A} mM_{\tilde{\lambda}} \tilde{A}^{-1} $.
Finally $M_{k\tilde{\lambda}}$ is the Jordan form of $\mathfrak{M}_Q$ and we conclude that $\fidim \frac{\Bbbk Q}{J^{2}} = n$.

From step 4, it is easy to see the next remark.

\begin{obs}

If $Q$ is a quiver such that $\fidim (\frac{\Bbbk Q}{J^2})$ is maximal, then for all $m \in\mathbb{Z}^+$, $ m \mathfrak{M}_Q$  is the adjacency matrix of a quiver $\bar{Q}$ with $\fidim (\frac{\Bbbk \bar{Q}}{J^2})$ maximal.

\end{obs}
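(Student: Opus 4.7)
The plan is to reduce the statement directly to the construction carried out in Steps 1--4, which (as the paper argues) parametrizes every radical square zero algebra of maximal $\phi$-dimension via a triple $(\tilde A, \tilde\lambda, m)$ and produces the adjacency matrix $\tilde A\,(m M_{\tilde\lambda})\,\tilde A^{-1}$.

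First I would check the easy preliminaries: since $\mathfrak{M}_Q$ has nonnegative integer entries and $m\in\mathbb{Z}^+$, the matrix $m\mathfrak{M}_Q$ has nonnegative integer entries and hence really is the adjacency matrix of a (multi)quiver $\bar Q$ on the same vertex set as $Q$. Strong connectedness passes from $Q$ to $\bar Q$ for free, because $(m\mathfrak{M}_Q)^k=m^k\,\mathfrak{M}_Q^k$ has a positive $(i,j)$-entry exactly when $\mathfrak{M}_Q^k$ does; and $Q$ is strongly connected by the characterization of maximal $\phi$-dimension recalled at the top of this section.

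The core of the argument is then a one-line manipulation. By that same characterization together with Steps 1--4, the hypothesis $\fidim(\Bbbk Q/J^2)=n$ means there exist $\tilde A$, $\tilde\lambda\in\mathbb{N}$ with $\tilde\lambda\geq 2$, and $m_0\in\mathbb{Z}^+$ such that
$$\mathfrak{M}_Q \;=\; \tilde A\,(m_0 M_{\tilde\lambda})\,\tilde A^{-1}.$$
Multiplying by $m$ and pulling the scalar inside gives
$$m\,\mathfrak{M}_Q \;=\; \tilde A\,\bigl((mm_0)\,M_{\tilde\lambda}\bigr)\,\tilde A^{-1},$$
which is exactly the output of Steps 1--4 applied to the same $\tilde A$ and $\tilde\lambda$ but with the integer factor replaced by $mm_0\in\mathbb{Z}^+$. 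The conclusion $\fidim(\Bbbk\bar Q/J^2)=n$ then follows from Step 4.

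There is essentially no obstacle; the only thing one might double-check is that replacing $m_0$ by $mm_0$ preserves every hypothesis of Step 4 (nonnegativity of the resulting matrix and, implicitly, integrality), which is automatic because $m\mathfrak{M}_Q$ itself is already a nonnegative integer matrix. Alternatively, and without invoking the explicit parametrization, one could argue that $mM_{\tilde\lambda}$ is similar to $M_{m\tilde\lambda}$ via the diagonal conjugation $\mathrm{diag}(1,m,m^2,\ldots,m^{n-2},1)$, so the Jordan form of $m\mathfrak{M}_Q$ is of the form $M_{\mu}$ with $\mu=mm_0\tilde\lambda\geq 2$, and apply the converse direction of the characterization; this gives the same conclusion but bypasses the explicit construction.
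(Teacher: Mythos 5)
Your proposal is correct and takes essentially the same route as the paper, whose entire proof is the one-line appeal ``From step 4, it is easy to see the next remark'': you reduce to Step 4 by writing $\mathfrak{M}_Q = \tilde{A}\,(m_0 M_{\tilde{\lambda}})\,\tilde{A}^{-1}$ via the Jordan-form characterization and absorbing $m$ into the integer factor. Your added details (nonnegativity and integrality of $m\mathfrak{M}_Q$, preservation of strong connectedness, and the similarity $mM_{\tilde{\lambda}} \sim M_{m\tilde{\lambda}}$ via diagonal conjugation) are exactly the verifications implicit in the paper's ``it is easy to see.''
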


Step 3 can be adapted so that $\tilde{A} M_{\tilde{\lambda}} \tilde{A}^{-1} \in M_{n\times n}(\mathbb{Q^+})$. Using that fact and the previus remark we get the remark

\begin{obs}

For every quiver $\bar{Q}$ there exist a quiver $Q$ such that $\bar{Q}$ is a subquiver of $Q$ and $\fidim (\frac{\Bbbk \bar{Q}}{J^2})$ is maximal.
\end{obs}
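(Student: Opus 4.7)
The plan is to combine the adapted version of Step 3 with the scaling remark just proved. Fix an arbitrary quiver $\bar{Q}$ and let $n=|\bar{Q}_0|$ (adding isolated vertices first if it helps to apply the construction, say to guarantee $n\geq 2$). I would first carry out Steps 1--4 with Step 3 tuned so that the resulting matrix $N=\tilde{A}M_{\tilde{\lambda}}\tilde{A}^{-1}$ lies in $M_{n\times n}(\mathbb{Q}^+)$, i.e.\ has \emph{strictly} positive rational entries. This is feasible because
\[
N=\tilde{A}M_{0}\tilde{A}^{-1}+\tilde{\lambda}\,\tilde{A}X_{1}\tilde{A}^{-1},
\]
and the rank--one matrix $\tilde{A}X_{1}\tilde{A}^{-1}$ has $(i,j)$ entry $(\tilde{v}_n)_i(\tilde{w}_n)_j>0$ thanks to the positivity choices in Step 1; taking $\tilde{\lambda}$ large enough pushes every entry of $N$ strictly above zero.

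Once this is done, clearing denominators as in Step 4 produces a quiver $Q_0$ on $n$ vertices whose adjacency matrix $\mathfrak{M}_{Q_0}$ is strictly positive integral and whose $\phi$-dimension $\fidim(\Bbbk Q_0/J^2)$ equals $n$, i.e.\ is maximal. In particular, between any pair of vertices of $Q_0$ there is at least one arrow.

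Next I would invoke the preceding remark: for every $m\in\mathbb{Z}^+$, the matrix $m\mathfrak{M}_{Q_0}$ is again the adjacency matrix of a quiver $Q_m$ on the same vertex set with $\fidim(\Bbbk Q_m/J^2)$ maximal, and every entry of $m\mathfrak{M}_{Q_0}$ is at least $m$. Choosing
\[
m\;\geq\;\max_{i,j}\bigl(\mathfrak{M}_{\bar{Q}}\bigr)_{ij}
\]
gives $\mathfrak{M}_{Q_m}(i,j)\geq\mathfrak{M}_{\bar{Q}}(i,j)$ for every pair $(i,j)$, so $\bar{Q}$ embeds as a subquiver of $Q_m$, which has maximal $\phi$-dimension. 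Setting $Q:=Q_m$ finishes the argument.

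The only delicate point is the adaptation of Step 3, where one must check that strict positivity of $\tilde{A}X_1\tilde{A}^{-1}$ (not merely nonnegativity) is forced by the positivity conditions on $\tilde{v}_n$ and $\tilde{w}_n$ fixed in Step 1; everything else is then a matter of scaling and a straightforward entrywise comparison of adjacency matrices.
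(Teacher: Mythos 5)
Your proposal is correct and takes essentially the same route as the paper: adapt Step 3 so that $\tilde{A} M_{\tilde{\lambda}} \tilde{A}^{-1}$ is strictly positive (which works because the rank-one term $\tilde{A}X_{1}\tilde{A}^{-1}$ has entries $(\tilde{v}_n)_i(\tilde{w}_n)_j>0$), and then invoke the preceding scaling remark with $m$ large enough that $m\mathfrak{M}_{Q_0}$ dominates $\mathfrak{M}_{\bar{Q}}$ entrywise. The paper leaves these details implicit in a single sentence, and your write-up simply makes the outer-product computation and the explicit choice of $m$ precise.
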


\section{Truncated path algebras}

An Artin algebra $A$ is called monomial if $A \cong \frac{\Bbbk Q}{I}$, where $I$ is generated by paths of $Q$. Truncated path algebras are a special case of monomial algebras. An Artin algebra $A$ is a truncated path algebra if there exists a quiver $Q$ and $k \in \N$ such that $A \cong \frac{\Bbbk Q}{J^k}$.

\begin{obs}

If $A$ is a truncated path algebra and $\rho$, $\nu$ paths of $\Bbbk Q$, then

\begin{itemize}

\item $\rho A = \nu A$ if $l(\rho) = l(\nu)$ and $t(\rho) = t(\nu)$,

\item If $Q$ has no sinks nor sources then $\rho A = \nu A$ if and only if $l(\rho) = l(\nu)$ and $t(\rho) = t(\nu)$.

\end{itemize}

\end{obs}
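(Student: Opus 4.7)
The plan for both items is to describe $\rho A$ concretely as a right $A$-module in terms of two data attached to $\rho$, namely its target $v=t(\rho)$ and its length $\ell=l(\rho)$, and then read off the two implications.

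For the forward implication (first bullet), I would first observe that a $\Bbbk$-basis of the right ideal $\rho A\subseteq A$ is given by the nonzero classes of the paths $\rho\tau$, where $\tau$ runs over paths of $Q$ with $s(\tau)=t(\rho)=v$ and $l(\tau)<k-\ell$. The recipe that sends $\rho\tau\mapsto\tau$ then identifies $\rho A$, as a right $A$-module, with the quotient $e_v A / L_{v,\ell}$, where $L_{v,\ell}$ is the submodule of $e_v A$ generated by all paths of length $k-\ell$ starting at $v$. Since $L_{v,\ell}$ only depends on the pair $(v,\ell)$, this already yields $\rho A\cong \nu A$ whenever $l(\rho)=l(\nu)$ and $t(\rho)=t(\nu)$, which is the intended reading of the equality in the statement.

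For the converse (second bullet), assume $\rho A\cong\nu A$. First, $\rho$ generates the top of $\rho A$, and since $\rho\, e_{t(\rho)}=\rho$ while $\rho\,e_u=0$ for $u\neq t(\rho)$, the top is the simple module $S_{t(\rho)}$; an isomorphism $\rho A\cong\nu A$ therefore forces $t(\rho)=t(\nu)$. Next, the description above gives
\[
\dim_\Bbbk \rho A=\sum_{m=0}^{k-\ell-1} p_m(v),
\]
where $p_m(v)$ is the number of paths of length $m$ starting at $v$. The absence of sinks guarantees $p_m(v)\geq 1$ for every $m\geq 0$, so this sum is strictly decreasing in $\ell$; comparing with the analogous sum for $\nu$ (which has the same $v$) yields $l(\rho)=l(\nu)$. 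The hypothesis that $Q$ has no sources is used to ensure that, for every choice of $v$ and $\ell<k$, a path with those data actually exists, so the statement is not vacuous.

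I do not expect a real obstacle here. The only points that require a little care are the verification that the candidate set of paths $\rho\tau$ is linearly independent in $A$ (which is immediate from the fact that paths of $Q$ form a basis of $\Bbbk Q$ and $J^k$ is spanned by paths of length $\geq k$), and the monotonicity of $\dim_\Bbbk\rho A$ in $\ell$, which rests purely on the no-sinks hypothesis.
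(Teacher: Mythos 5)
Your argument is correct. Note first that the paper offers no proof of this statement at all---it appears as an unproved remark---so your write-up supplies the routine verification the authors leave implicit, and it does so in the way their later statements demand: your identification $\rho A\cong e_vA/L_{v,\ell}$ (equivalently, the kernel of the surjection $e_vA\twoheadrightarrow\rho A$, $x\mapsto\rho x$, is spanned by the paths of length at least $k-\ell$ starting at $v$) is exactly what makes the paper's notation $M^l_v(A)$ well defined and is consistent with their displayed syzygy formula for $M^l_v(A)$. Three small observations. First, as you correctly point out, the ``equality'' $\rho A=\nu A$ must be read as isomorphism of right $A$-modules: literal equality of right ideals fails already for two parallel arrows $\alpha,\beta\colon 1\to 2$, where $\alpha A$ and $\beta A$ are distinct one-dimensional subspaces. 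Second, your proof in fact shows that the ``only if'' direction uses only the absence of sinks: the computation $\topp(\rho A)\cong S_{t(\rho)}$ needs no hypothesis, and the strict monotonicity of $\dim_\Bbbk\rho A=\sum_{m=0}^{k-\ell-1}p_m(v)$ in $\ell$ needs only $p_m(v)\geq 1$, i.e.\ no sinks; the absence of sources enters, as you say, only to guarantee that $M^l_v(A)$ exists for every vertex $v$ and every $1\leq l\leq k-1$, matching the paper's later criterion in terms of $\id_{\frac{\Bbbk Q}{J^2}}S_v$. Third, both your statement and the paper's implicitly assume the ideals are nonzero: two paths of (possibly different) lengths $\geq k$ generate the same zero ideal, so the ``only if'' should be read for paths of length less than $k$; your top and dimension arguments silently use this, and it would be worth one sentence making it explicit.
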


We denote by $M^l_v(A)$ the ideal $\rho A$, where $l(\rho) = l$, $t(\rho) = v$ and $M^l(A) = \oplus_{v \in Q_0} M^l_v(A) $.

Given $0<l\leq k-1$, it is easy to see that $M^l_v(A)$ exists if and only if $\id_{\frac{\Bbbk Q}{J^{2}}} S_v \geq l$. Also, $M^l_v(A)$ is a projective $A$-module if and only if $\pd_{\frac{\Bbbk Q}{J^{2}}} S_v \leq k-l-1$. 
We also denote by $V_l(A)$ the subgroup of $K_0(A)$ with basis $\left\{\left[M^l_v(A)\right]:v \in Q_0,\ \left[M^l_v(A)\right]\neq 0\right\}$.

\begin{obs}
If $A = \frac{\Bbbk Q}{J^k}$ is a truncated path algebra then
$$\Omega( M_v^l(A)) = \bigoplus_{\rho:\left\{\substack{\start(\rho) = v\\ \length(\rho)= k-l}\right.} M_{ \target(\rho)}^{k-l}(A),$$
$$\Omega^2( M_v^l(A)) = \bigoplus_{\rho:\left\{\substack{\start(\rho) = v\\ \length(\rho)= k}\right.} M_{ \target(\rho)}^{l}(A).$$
\end{obs}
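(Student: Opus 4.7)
The plan is to extract $\Omega(M_v^l(A))$ from the minimal projective cover, then apply the resulting formula iteratively to get $\Omega^2$. Fix a representative path $\rho_0$ with $\length(\rho_0) = l$ and $\target(\rho_0) = v$, so that $M_v^l(A) = \rho_0 A$. Since $\topp(M_v^l(A)) = S_v$, the projective cover is $\prj(S_v) = e_v A$ with surjection $x \mapsto \rho_0 x$.

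Next, I would identify $\Omega(M_v^l(A))$ as the kernel of this surjection. Using the basis of $e_v A$ consisting of paths $\sigma$ with $\start(\sigma) = v$ and $\length(\sigma) < k$, the relation $\rho_0 \sigma = 0$ in $A = \Bbbk Q / J^k$ amounts to $\length(\rho_0 \sigma) \geq k$, i.e.\ $\length(\sigma) \geq k-l$. Hence $\Omega(M_v^l(A))$ has basis the paths $\sigma$ starting at $v$ with $k-l \leq \length(\sigma) < k$.

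The key combinatorial step is to partition this basis according to the unique length-$(k-l)$ prefix of each such path. This yields
\[ \Omega(M_v^l(A)) = \sum_\rho \rho A, \]
the sum ranging over paths $\rho$ with $\start(\rho)=v$ and $\length(\rho) = k-l$, and by definition $\rho A = M^{k-l}_{\target(\rho)}(A)$. The sum is direct as a sum of submodules of $e_v A$: if $\rho_1 \neq \rho_2$ are two such paths, they must differ at some arrow, so no nonzero path can begin with both, forcing $\rho_1 A \cap \rho_2 A = 0$. This produces the first formula.

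For $\Omega^2$, I would iterate. Applying the first formula to each summand $M^{k-l}_{\target(\rho)}(A)$, now with parameters $v \rightsquigarrow \target(\rho)$ and $l \rightsquigarrow k-l$, gives a double direct sum indexed by pairs $(\rho,\tau)$ with $\start(\rho)=v$, $\length(\rho)=k-l$, $\start(\tau) = \target(\rho)$, $\length(\tau) = l$. Concatenation $(\rho,\tau) \mapsto \rho\tau$ is a bijection between such pairs and paths of length $k$ from $v$, preserving the target, which delivers the stated formula. The only mild subtlety is checking the direct sum in the third step; the formulas behave correctly in the degenerate case when $M_v^l(A)$ is itself projective, since then there are no paths of length $k-l$ starting at $v$ and both sides collapse to zero.
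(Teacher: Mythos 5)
Your proof is correct, and it matches what the paper intends: the paper states this as a remark without proof, treating it as the routine computation you carried out — identifying the projective cover $e_vA \twoheadrightarrow \rho_0 A$, observing the kernel is spanned by the paths from $v$ of length at least $k-l$, and grouping them by their length-$(k-l)$ prefixes (which also makes the sum visibly direct, since the prefix classes partition a basis). Iterating to get $\Omega^2$ via the unique factorization of a length-$k$ path into a length-$(k-l)$ prefix and length-$l$ suffix is likewise the intended argument, so there is nothing to flag.
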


The following result allows us to compute the global dimension of truncated path algebras as a function of the length of the longest path of their quivers. For a proof see Theorem 5 of \cite{DH-ZL}.
\begin{prop}\cite{DH-ZL}

Let $A  \cong \frac{\Bbbk Q}{J^k}$ be a truncated path algebra. Then the following statements hold.

\begin{enumerate}

\item $\gl A < \infty$ if and only if $Q$ has no oriented cycles. 

\item If $A$ has finite global dimension, then 
$$\gl A =
\left\{
\begin{array}{ll}
2\frac{l}{k}&\text{if } l\equiv0\pmod{k},\\
2\left\lfloor\frac{l}{k}\right\rfloor+1&\text{otherwise},
\end{array}\right.$$ where $l$ is the length of the largest path of $\Bbbk Q$.

\end{enumerate}




\end{prop}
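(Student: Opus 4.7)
The plan is to reduce the whole computation to projective dimensions of simple modules via the syzygy formulas recorded in the preceding Remark. Note that $S_v = M_v^{k-1}(A)$, and a direct inspection of the $M_w^l(A)$'s yields the two criteria we will use repeatedly: the simple $S_w$ is projective iff $w$ is a sink, while $M_w^1(A)$ is projective iff the longest path starting at $w$ has length at most $k-2$ (both follow at once from the characterization of projectivity stated just before the Remark together with $\pd_{\Bbbk Q/J^2}S_w = 1 + \length(\text{longest path from }w)-1$). Iterating the formulas gives
\[\Omega^{2m}(S_v)=\bigoplus_{\substack{\start(\rho)=v \\ \length(\rho)=mk}}S_{\target(\rho)}, \qquad \Omega^{2m+1}(S_v)=\bigoplus_{\substack{\start(\rho)=v \\ \length(\rho)=mk+1}}M_{\target(\rho)}^{1}(A),\]
which reduces the question of $\pd(S_v)$ to a purely combinatorial question about path lengths leaving $v$.

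For part (1), if $Q$ contains an oriented cycle through $v$ then paths starting at $v$ exist of arbitrary length, so every syzygy $\Omega^n(S_v)$ contains a summand indexed by a non-sink vertex and is therefore non-projective; hence $\pd(S_v)=\infty$ and $\gl A=\infty$. Conversely, if $Q$ is acyclic the path lengths out of every vertex are bounded, so the syzygies above eventually vanish or become projective, giving $\gl A<\infty$.

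For part (2), assume $Q$ is acyclic, let $L_v$ denote the length of the longest path starting at $v$, and write $L_v=mk+r$ with $0\le r<k$. If $r=0$ then every length-$mk$ path from $v$ must end at a sink (otherwise one could extend it), so $\Omega^{2m}(S_v)$ is projective, while truncating a longest path to length $mk-k+1$ produces a target $w$ with $L_w\ge k-1$, yielding a non-projective summand $M_w^1(A)$ of $\Omega^{2m-1}(S_v)$; thus $\pd(S_v)=2m=2L_v/k$. If $r>0$ then cutting a longest path after $mk$ steps gives a non-sink target, so $\Omega^{2m}(S_v)$ has a non-projective simple summand, whereas for every length-$(mk+1)$ path from $v$ ending at $w$ one has $L_w\le L_v-(mk+1)=r-1\le k-2$, so every summand of $\Omega^{2m+1}(S_v)$ is projective; hence $\pd(S_v)=2m+1=2\lfloor L_v/k\rfloor+1$. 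Since $\gl A=\max_v\pd(S_v)$ and $l=\max_v L_v$, taking the maximum in $v$ produces the stated formula. The main obstacle is the bookkeeping in the case analysis, in particular certifying that $\Omega^n(S_v)$ first becomes projective at the claimed step by exhibiting a non-projective summand one step earlier, which is always achieved by following the prefix of a longest path from $v$.
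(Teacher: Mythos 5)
Your proof is correct. Note, however, that there is no internal proof to compare it against: the paper quotes this proposition from the literature and defers the proof to Theorem 5 of \cite{DH-ZL}. What you have done is reconstruct a self-contained argument from the paper's own toolkit, namely the Remark immediately preceding the proposition giving $\Omega(M_v^l(A))$ and $\Omega^2(M_v^l(A))$: iterating it on simples to get the even/odd syzygy formulas, translating projectivity of $S_w$ and of $M_w^1(A)$ into ``$w$ is a sink'' and ``the longest path from $w$ has length at most $k-2$'', and then doing the division-with-remainder bookkeeping on $L_v$; this is essentially the standard argument and in spirit the one in \cite{DH-ZL}. Two cosmetic points, neither of which is a gap: (i) the identification $S_v = M_v^{k-1}(A)$ presupposes that a path of length $k-1$ ending at $v$ exists (by the paper's convention $M_v^{k-1}(A)$ may not exist, e.g.\ when $v$ is a source), but this is harmless since $\Omega(S_v) = \mathrm{rad}(e_v A) = \bigoplus_{\alpha : v \to w} M_w^1(A)$ holds directly and that is all your induction needs; (ii) the closing step ``taking the maximum in $v$ produces the stated formula'' deserves one explicit line: the function sending $L_v$ to $\pd(S_v)$, namely $2L_v/k$ if $k \mid L_v$ and $2\lfloor L_v/k\rfloor + 1$ otherwise, is nondecreasing in $L_v$, so $\gl A = \max_v \pd(S_v)$ is attained at a vertex realizing $l = \max_v L_v$, which yields the claimed formula.
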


For a proof of the next theorem see Theorem 5.11 of \cite{BH-ZR}, and for definitions of skeleton and $\sigma$-critical see \cite{DH-ZL}.

\begin{teo}\cite{BH-ZR}\label{sizigia en truncadas}
Let $A$ be a truncated path algebra. If $M$ is any nonzero left $A$-module with
skeleton $\sigma$, then
$$\Omega(M) \cong \bigoplus_{\rho \text{ is } \sigma \text{-critical}} \rho A.$$
\end{teo}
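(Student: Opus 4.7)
The plan is to exhibit $\Omega(M)$ by constructing the minimal projective cover of $M$ and identifying its kernel with the announced direct sum, using the normal form provided by the skeleton. Recall that a skeleton $\sigma$ of $M$ is a set of paths which, when applied to a distinguished system of top elements $\{t_1,\ldots,t_r\}$ sitting at vertices $v_1,\ldots,v_r$, yields a $\Bbbk$-basis of $M$. This lets one take the projective cover $\pi:P\twoheadrightarrow M$ with $P=\bigoplus_{i=1}^{r} e_{v_i}A$ and $\pi(e_{v_i})=t_i$.

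First I would check that $\rho A \subseteq \ker \pi$ for every $\sigma$-critical path $\rho$. By definition, such a $\rho$ starts at some $v_i$, is not in $\sigma$, and either has length $k$ (so $\rho=0$ already in $A$) or has all proper prefixes in $\sigma$. In the latter case, expressing $\pi(\rho)$ in the skeleton basis and combining $A$-linearity of $\pi$ with the minimality of $\sigma$ forces $\pi(\rho)=0$. Hence each $\rho A$ sits inside $\Omega(M)$, and there is a natural map $\Phi: \bigoplus_{\rho\ \sigma\text{-critical}} \rho A \longrightarrow \Omega(M)$.

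The heart of the argument is to show that $\Phi$ is an isomorphism. For surjectivity I would argue by induction on the maximum length of a path appearing in an element $x \in \ker \pi$ written in the monomial basis of $P$: any path $\tau$ occurring in $x$ either lies in $\sigma$, in which case $\Bbbk$-linear independence of the skeleton basis forces the $\sigma$-part of $x$ to cancel by itself and contribute nothing to $\ker\pi$, or $\tau$ admits a proper prefix $\rho$ that is $\sigma$-critical, so $\tau = \rho\mu \in \rho A$. For injectivity, one observes that two distinct $\sigma$-critical paths $\rho \neq \rho'$ cannot share a common extension modulo $J^k$ (otherwise one would be a prefix of the other, contradicting minimality); therefore the monomial supports of $\rho A$ and $\rho' A$ inside $P$ are disjoint, forcing the sum to be direct.

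The main obstacle is the careful bookkeeping when paths of length close to $k$ are involved, and matching the minimality built into the definition of $\sigma$-critical with the actual generators of the kernel, so as to avoid over- or undercounting. Once one views the skeleton as a canonical normal form for $M$ and the $\sigma$-critical paths as its first-order obstructions, the decomposition of $\Omega(M)$ becomes an instance of the general principle that syzygies in monomial algebras are governed by minimal obstructions to the canonical form.
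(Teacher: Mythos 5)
Your proposal founders on its very first step, and the error is not one of bookkeeping: it is simply false that $\pi(\rho)=0$ for every $\sigma$-critical path $\rho$, i.e.\ that $\rho A\subseteq \ker \pi$ as submodules of the projective cover $P$. The theorem asserts only an \emph{abstract} isomorphism $\Omega(M)\cong\bigoplus_{\rho}\rho A$, not that the critical paths themselves land in the kernel. Concretely, let $Q$ be one vertex with two loops $\alpha,\beta$, let $k=2$, $A=\frac{\Bbbk Q}{J^2}$, and $M=A/(\alpha-\beta)A$. Then $\sigma=\{e,\alpha\}$ is a skeleton, the only $\sigma$-critical path giving a nonzero module is $\beta$, and $\pi(\beta)=\overline{\beta}=\overline{\alpha}\neq0$ in $M$; here $\ker\pi=\Bbbk(\alpha-\beta)$, which is isomorphic to $\beta A$ but neither equals nor contains it (and no other choice of projective cover fixes this, since $\mathrm{ann}(M)=\Bbbk(\alpha-\beta)\neq\Bbbk\beta=\mathrm{ann}(A/\beta A)$, so $M\not\cong A/\beta A$). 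Nothing in the definition of a skeleton forces a critical path to act as zero on $M$: in general $\pi(\rho)$ is a nonzero element of $MJ^{\length(\rho)}$, and that is precisely the subtlety of the statement. Consequently your map $\Phi$ does not exist, and both halves of your main argument inherit the error: a kernel element may well have nonzero $\sigma$-part (as $\alpha-\beta$ above does), so ``the $\sigma$-part cancels by itself'' fails; and once the generators are corrected, the disjoint-monomial-support argument for directness evaporates as well.

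For comparison, the paper itself gives no proof of this statement --- it cites Theorem 5.11 of \cite{BH-ZR} --- and the argument there repairs exactly the gap above. For each $\sigma$-critical $\rho$ of length $l$ one writes $\pi(\rho)=\sum_p c_p\,\pi(p)$, the sum running over skeleton paths $p$ of length $\geq l$ (possible because $\pi(\rho)\in MJ^l$ and such $p$ induce a basis of $MJ^l$), and sets $x_\rho=\rho-\sum_p c_p\,p\in\ker\pi$. Truncatedness enters precisely here: since every $p$ occurring has length $\geq l$ and $p\neq\rho$, no monomial $pa$ can coincide with $\rho a$, so $\rho a$ survives in $x_\rho a$; hence $x_\rho$ and $\rho$ have the same annihilator $t(\rho)J^{k-l}$ and $x_\rho A\cong\rho A$. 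One then checks that the $x_\rho$ generate $\ker\pi$ (rewrite a non-skeleton monomial $u=\rho a$ as $x_\rho a$ plus terms $pa$, and induct on the length of the tail $a$, which strictly drops), and concludes with a dimension count: $\dim_{\Bbbk}\ker\pi=\dim P-\dim M=\sum_\rho\dim\rho A$, which simultaneously forces $\sum_\rho x_\rho A$ to be direct and to exhaust the kernel. Your surjectivity and injectivity steps are, in effect, the degenerate case of this argument in which all coefficients $c_p$ vanish.
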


The next result follows from the above theorem.
\begin{prop}\label{sizigia de truncadas}
If $A = \frac{\Bbbk Q}{J^k}$ is a truncated path algebra, then the following set $\{[\rho A]: l(\rho) \geq 1,\ [\rho A]\neq 0\}$ is a basis for $K_1(A)$.
\end{prop}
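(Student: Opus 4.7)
The plan is to combine Theorem~\ref{sizigia en truncadas} with a Krull--Schmidt argument in $K_0(A)$. Since $K_1(A)=\bar{\Omega}(K_0(A))$, for the spanning property it suffices to see that $\bar{\Omega}[M]$ lies in the subgroup generated by the proposed set for every indecomposable non-projective $M$. Theorem~\ref{sizigia en truncadas} writes $\Omega(M)=\bigoplus_\rho \rho A$ over the $\sigma$-critical paths of any skeleton $\sigma$ of $M$, so $\bar{\Omega}[M]$ is a sum of classes $[\rho A]$. Any summand with $\length(\rho)=0$ is of the form $e_{\target(\rho)}A$ and therefore projective, so its class vanishes in $K_0(A)$; discarding those shows that $K_1(A)$ is generated by the set in the statement.

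For linear independence I would show that the classes in the stated set correspond bijectively to pairwise non-isomorphic indecomposable non-projective modules, naturally parametrised by those pairs $(l,v)$ with $1\le l<k$, $v\in Q_0$, for which $M_v^l(A)$ exists and is non-projective. Each $\rho A$ has simple top $S_{\target(\rho)}$ and is therefore indecomposable, and the equivalence ``$[\rho A]\neq 0$ iff $\rho A$ is non-projective'' is the criterion recalled just after the definition of $M_v^l(A)$. Two such modules with distinct targets have distinct tops, so they are non-isomorphic. For a common target $v$ and lengths $l<l'$ with both modules non-projective, the elementary identity $\dim_{\Bbbk}M^l_v(A)=\#\{\beta:\start(\beta)=v,\ \length(\beta)\leq k-1-l\}$, together with the fact that the set of paths starting at $v$ is closed under taking initial subpaths, forces $\dim_{\Bbbk} M^l_v(A)>\dim_{\Bbbk} M^{l'}_v(A)$. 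Since by Krull--Schmidt and the defining relations $K_0(A)$ is free abelian on the iso classes of indecomposable non-projective modules, the proposed classes are linearly independent.

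The main technical point, and the only place not immediately covered by earlier results or by the remarks of Section~4 (which address only isomorphisms between $\rho A$ and $\nu A$ for paths of the same length), is the non-isomorphism $M_v^l(A)\not\cong M_v^{l'}(A)$ for $l\neq l'$ at a common vertex; it is precisely the short dimension argument above, hinging on the subpath-closure of paths out of $v$, that makes this work. Everything else follows directly from Theorem~\ref{sizigia en truncadas} together with the standard description of $K_0(A)$ as a free abelian group on indecomposable non-projective modules.
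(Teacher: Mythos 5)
There is a genuine gap. Your argument establishes exactly two things: that $K_1(A)$ is \emph{contained in} the subgroup generated by the proposed set $S=\{[\rho A]: l(\rho)\geq 1,\ [\rho A]\neq 0\}$, and that $S$ is linearly independent in $K_0(A)$. Together these do not make $S$ a basis of $K_1(A)$, because you never prove the reverse containment $S\subseteq K_1(A)$, i.e.\ that each individual class $[\rho A]$ is itself the class of a syzygy. Compare $2\mathbb{Z}\subset\mathbb{Z}=\langle 1\rangle$: the set $\{1\}$ is linearly independent and its span contains $2\mathbb{Z}$, yet it is not a basis of $2\mathbb{Z}$. The same phenomenon is not ruled out here: Theorem~\ref{sizigia en truncadas} only tells you that $K_1(A)$ is generated by the elements $\bar{\Omega}[M]=\sum_{\rho}[\rho A]$, and a priori only certain \emph{sums} of the $[\rho A]$, rather than the classes themselves, might lie in $K_1(A)$. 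This missing step is precisely the half of the paper's proof that is written out explicitly: if $\rho A$ is non-projective, then $\rho A$ is a submodule of $\mathrm{rad}(s(\rho)A)$ (as $l(\rho)\geq 1$), so the quotient map $s(\rho)A\rightarrow s(\rho)A/\rho A$ is a projective cover and $\rho A=\Omega\left(s(\rho)A/\rho A\right)$; hence $[\rho A]=\bar{\Omega}\left[s(\rho)A/\rho A\right]\in K_1(A)$. With this one line added, your argument closes.

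Conversely, what you single out as the ``main technical point'' --- showing $M_v^l(A)\not\cong M_v^{l'}(A)$ for $l\neq l'$ via the dimension count --- is not needed at all. Linear independence of $S$ is automatic: each $\rho A$ with $[\rho A]\neq 0$ is indecomposable (simple top, as you note) and non-projective, so the elements of $S$ are pairwise distinct members of the canonical free basis of $K_0(A)$, hence independent. Whether two different paths give isomorphic modules only affects how the set $S$ is parametrised, not its independence, and the proposition as stated says nothing about a parametrisation by pairs $(l,v)$. So your dimension argument, though correct, addresses a non-issue, while the actual mathematical content of the proposition --- membership of the classes $[\rho A]$ in $K_1(A)$ --- is the step that is absent.
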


\begin{proof}
As a consequence of Theorem \ref{sizigia en truncadas}, $K_1(A) \subset \langle \{[\rho A ]\}_{ l(\rho ) \geq 1}\rangle $. On the other hand, given $\rho$ a path of length bigger or equal to $1$, if $\rho A$ is non-projective then $\rho A$ is a submodule of $s(\rho)A$. Thus $\rho A = \Omega \left(\frac{s(\rho)A}{\rho A}\right)$.  
\end{proof}

For a non self-injective truncated path algebra we can compute its $\phi$-dimension as follows.

\begin{coro}\label{calculo de la phi}

If $A = \frac{\Bbbk Q}{J^k}$ is a non self-injective truncated path algebra, then 

$$\fidim(A) = 1+\phi(\bigoplus_{ l(\rho) \geq 1} \rho A).$$

\end{coro}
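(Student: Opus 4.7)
The plan is to prove the two inequalities $\fidim(A) \leq 1 + \phi(N)$ and $\fidim(A) \geq 1 + \phi(N)$ separately, where $N = \bigoplus_{l(\rho)\geq 1}\rho A$.

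For the upper bound, I would rely on Theorem \ref{sizigia en truncadas}: for every $M\in\mod A$, the syzygy $\Omega(M)$ decomposes as a direct sum of modules of the form $\rho A$ with $l(\rho)\geq 1$, so $\Omega(M)\in\add(N)$ and therefore $\bar\Omega(\langle\add M\rangle)\subseteq \langle\add N\rangle$. Iterating gives $\bar\Omega^{l+s}\langle\add M\rangle \subseteq \bar\Omega^{l+s-1}\langle\add N\rangle$ for every $l+s\geq 1$. Whenever $l\geq \phi(N)+1$, the map $\bar\Omega$ restricted to $\bar\Omega^{l+s-1}\langle\add N\rangle$ is a monomorphism for every $s\geq 0$, and hence also on the smaller subgroup $\bar\Omega^{l+s}\langle\add M\rangle$. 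Thus $\phi(M)\leq \phi(N)+1$ for every $M$, proving $\fidim(A)\leq 1+\phi(N)$.

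For the lower bound, I would distinguish two cases. If $\phi(N)=0$, the non-self-injectivity of $A$ combined with Theorem \ref{Phi = 0} immediately gives $\fidim(A)\geq 1 = 1+\phi(N)$. If $\phi(N)\geq 1$, I would construct the witness module $M_0=\bigoplus_{l(\rho)\geq 1} s(\rho)A/\rho A$, using that for each path $\rho$ with $l(\rho)\geq 1$ the short exact sequence
$$0\to \rho A\to s(\rho)A\to s(\rho)A/\rho A\to 0$$
with $s(\rho)A$ projective gives $\bar\Omega([s(\rho)A/\rho A])=[\rho A]$. Then every generator $[\rho A]$ of $\langle\add N\rangle$ lies in $\bar\Omega(\langle\add M_0\rangle)$, and combined with the reverse inclusion from the upper-bound step this yields the equality $\bar\Omega(\langle\add M_0\rangle)=\langle\add N\rangle$. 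Iterating, $\bar\Omega^{l+s}\langle\add M_0\rangle=\bar\Omega^{l+s-1}\langle\add N\rangle$ for every $l+s\geq 1$. If one had $\phi(M_0)\leq \phi(N)$, then $\bar\Omega$ would be a monomorphism on $\bar\Omega^{\phi(N)-1+s}\langle\add N\rangle$ for every $s\geq 0$, contradicting the minimality in Definition \ref{monomorfismo} since $\phi(N)\geq 1$. Hence $\phi(M_0)\geq \phi(N)+1$, and by the upper bound $\phi(M_0)=1+\phi(N)$, giving $\fidim(A)\geq \phi(M_0)=1+\phi(N)$.

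The main obstacle is moving from the inclusion $\bar\Omega(\langle\add M_0\rangle)\subseteq \langle\add N\rangle$ (which holds for any module) to the equality in the lower-bound step. The choice $M_0=\bigoplus s(\rho)A/\rho A$ is the right one precisely because each summand explicitly realises the corresponding $\rho A$ as its first syzygy, an idea already used in the proof of Proposition \ref{sizigia de truncadas}. Once that equality is secured, the monomorphism argument reduces formally to the definition of $\phi(N)$, with the self-injective case peeled off by Theorem \ref{Phi = 0}.
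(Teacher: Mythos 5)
Your proof is correct and is essentially the argument the paper leaves implicit: the upper bound $\fidim(A)\leq 1+\phi(N)$ comes from Theorem \ref{sizigia en truncadas} (every first syzygy lies in $\add N$, so $\phi(M)\leq\phi(\Omega(M))+1\leq\phi(N)+1$ by Proposition \ref{it1}), and the lower bound uses the witness $\bigoplus_{l(\rho)\geq 1}s(\rho)A/\rho A$ already implicit in the proof of Proposition \ref{sizigia de truncadas}, with the degenerate case $\phi(N)=0$ settled by Theorem \ref{Phi = 0} exactly as the paper would. The only detail worth adding is that each nonzero $\rho A$ is indecomposable (it is cyclic with simple top $S_{t(\rho)}$, being isomorphic to $e_{t(\rho)}A/e_{t(\rho)}J^{k-l(\rho)}$), which is what guarantees that $\langle\add N\rangle$ is generated by the classes $[\rho A]$ and hence that your key equality $\bar{\Omega}\langle\add M_0\rangle=\langle\add N\rangle$ holds.
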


Now, if $A$ is a self-injective truncated path algebra, we have the result that generalize Proposition 4.13 of \cite{LMM}.

\begin{prop}\label{autoinyectiva} For a connected non-simple truncated path algebra $A = \frac{\Bbbk Q}{J^k}$ the following statements are equivalent.
\begin{enumerate}
\item $M^l_v$ exists, is not a projective module and $\Omega(M^l_v)$ is indecomposable for all $v \in Q_0$ and $1 \leq l \leq k-1$.
\item $Q$ is a cycle, i.e., $Q = C^n$.
\item $A$ is a Nakayama algebra without injective simple modules.
\item $A$ is a Nakayama algebra without projective simple modules.
\item $A$ is a selfinjective algebra.
\item All indecomposable projective modules have length $k$.
\item All indecomposable injective modules have length $k$.
\item The $\phi$-dimension of $A$ is zero.
\end{enumerate}
\end{prop}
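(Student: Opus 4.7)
I would prove the eight conditions equivalent by organising everything around condition $(2)$: establish $(2) \Rightarrow (i)$ for every other $i$, then $(i) \Rightarrow (2)$ for each $i \neq 2$, with Theorem~\ref{Phi = 0} as the bridge between $(5)$ and $(8)$.

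\textbf{From $(2)$ to the rest.} If $Q = C^n$ then $A = \Bbbk C^n/J^k$ is the classical cyclic self-injective Nakayama algebra: each vertex has a unique in-arrow and a unique out-arrow, so each $e_vA$ and each $D(Ae_v)$ is uniserial of length exactly $k$, giving $(6)$ and $(7)$; non-simplicity forces $k \geq 2$, so no simple is projective or injective, giving $(3)$ and $(4)$. Self-injectivity of $\Bbbk C^n/J^k$ gives $(5)$, and Theorem~\ref{Phi = 0} then gives $(8)$. For $(1)$, uniqueness of paths on $C^n$ makes $M_v^l$ uniserial of length $k-l<k$, so non-projective, and the syzygy formula of the preceding remark yields $\Omega(M_v^l) = M_w^{k-l}$ for the unique vertex $w$ reached from $v$ in $k-l$ steps, a single summand.

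\textbf{From the others back to $(2)$.} $(3), (4) \Rightarrow (2)$: a connected Nakayama quiver is either linearly oriented $A_n$ or cyclic $C^n$, and in the linear case the unique source yields an injective simple (since $Ae_{\text{source}}$ is one-dimensional, so $D(Ae_{\text{source}}) = S_{\text{source}}$) and the unique sink a projective simple, so either hypothesis excludes the linear case. $(5) \Rightarrow (2)$: self-injectivity forces simple socle of every $e_vA$; in a truncated path algebra the socle of $e_vA$ is spanned by the length-$(k-1)$ paths out of $v$ together with paths ending at sinks, so there are no sinks and a unique length-$(k-1)$ path from each $v$, hence a unique out-arrow; the Nakayama permutation is a bijection, which dually gives a unique in-arrow; so $Q$ is a disjoint union of cycles and then $C^n$ by connectedness. $(6), (7) \Rightarrow (2)$: the uniform length-$k$ condition on projectives (resp.\ injectives) yields a recursive constraint on the number of paths of each length starting (resp.\ ending) at every vertex, which iteratively pins down out-degree (resp.\ in-degree) to $1$ and forces $Q = C^n$. $(1) \Rightarrow (2)$: taking $l=k-1$ in the syzygy formula, $\Omega(M_v^{k-1}) = \bigoplus_{\alpha: v \to w} M_w^{k-1}$, so indecomposability forces exactly one out-arrow at each $v$, while the existence of $M_v^{k-1}$ for every $v$ gives a length-$(k-1)$ path into every $v$; the successor map (follow the unique out-arrow) is then a finite-set function whose $(k-1)$-st iterate is surjective and hence itself a bijection, giving in-degree $1$ everywhere and so $Q = C^n$ by connectedness.

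\textbf{Main obstacle.} I expect the length-based implications $(6), (7) \Rightarrow (2)$ to require the most care, because the local combinatorial structure of $Q$ must be extracted from a single global dimension identity and one must rule out non-cyclic configurations that could accidentally equalise all projective (resp.\ injective) dimensions at the value $k$; a careful unwinding by induction on $k$ combined with Theorem~\ref{sizigia en truncadas} should resolve this. The implication $(1) \Rightarrow (2)$ is cleaner, but the degree-counting bijection argument still needs attention in the degenerate case $k = 2$, where the hypothesis collapses to a pure statement about in- and out-degrees.
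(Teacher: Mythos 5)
Your architecture coincides with the paper's: everything is routed through condition $(2)$, conditions $(3)$ and $(4)$ are handled via the dichotomy for Nakayama quivers (linearly oriented $A_n$ versus the cycle $C^n$), and $(8)$ is identified with $(5)$ through Theorem~\ref{Phi = 0}. Where you and the paper overlap, your arguments are correct and more detailed than the printed ones; your successor-map count in $(1)\Rightarrow(2)$ is the paper's ``no sources, unique out-arrow, pigeonhole'' argument in different clothing. Two slips are minor and fixable. First, the syzygy formula with $l=k-1$ gives $\Omega(M_v^{k-1})=\bigoplus_{\alpha:v\to w}M_w^{1}$, not $\bigoplus_{\alpha:v\to w}M_w^{k-1}$; this does not affect your conclusion. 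Second, in $(5)\Rightarrow(2)$ the inference ``simple socles, hence no sinks'' is not valid: the path algebra of $1\to 2$ (i.e.\ $k=2$) has simple socle in every indecomposable projective and has a sink. You must use self-injectivity itself: a sink $w$ makes $S_w=e_wA$ projective, hence injective, contradicting that its injective envelope $D(Ae_w)$ has length at least $2$ (connectedness and non-simplicity give an arrow into $w$).

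The genuine gap is exactly where you predicted it, in $(6)\Rightarrow(2)$ and $(7)\Rightarrow(2)$, but it is not a matter of care: your plan of pinning down out-degrees from $(6)$ and in-degrees from $(7)$ \emph{separately} cannot be completed, because $(6)$ carries no information about in-degrees at all. Concretely, let $k=2$ and let $Q$ have vertices $1,2$, one arrow $\alpha:1\to 2$ and a loop $\beta$ at $2$. Then $e_1A$ has basis $\{e_1,\alpha\}$ and $e_2A$ has basis $\{e_2,\beta\}$, so every indecomposable projective of $A=\frac{\Bbbk Q}{J^2}$ has length $2=k$ and $(6)$ holds; yet $Q\neq C^n$, and $A$ is not self-injective, since both indecomposable projectives have socle $S_2$. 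Dually, the opposite algebra satisfies $(7)$ but not $(2)$. What is true is that $(6)$ is equivalent to ``every vertex has out-degree one'' and $(7)$ to ``every vertex has in-degree one,'' so for connected $Q$ the conjunction $(6)\wedge(7)$ is equivalent to $(2)$, but neither condition alone is. Be aware that the paper's own proof shares this defect -- it dispatches the step with ``the sixth and seventh statements easily implies the second one,'' which is false for each statement taken individually -- so the proposition as printed needs repair (merge $(6)$ and $(7)$ into one condition, or add the hypothesis that $Q$ has no sources nor sinks, as in the radical square zero antecedent in \cite{LMM}); no proof attempt of the two implications as stated, yours or the paper's, can succeed.
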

\begin{proof}
We start by showing that the first statement implies the second one.
Since we are assuming that $M^l_v$ exist for all $v \in Q_0$ then $v$ is not a source for all $v \in Q_0$. In an analogous way, since $M^l_v$ is not projective for all $v \in Q_0$ then $v$ is not a sink for all $v \in Q_0$. On the other hand, for every vertex $v$ there is only one arrow that starts at $v$ because $\Omega(M^l_v)$ is indecomposable for all $v \in Q_0$. By the pidgeonhole principle for every vertex $v$ there is only one arrow with target $v$. Because the quiver $Q$ is finite we get that $Q$ is a cycle $C^n$.

It is clear that the second statement implies all the other statements.

We now show that the third statement implies the second one. This follows from the fact that the quiver of a Nakayama algebra is either a linearly ordered $A_n$ or a cycle $C^{n}$. If there is no simple injective it must be a cycle.

Equivalence between two and four is similar to the equivalence between two and three.

Finally we show that the fifth statement implies the second one. The hypothesis implies that there is exactly one arrow starting and one arrow ending at any vertex. Therefore $Q$ is a cycle $C^n$.

The sixth and seventh statements easily implies the second one.

And finally, the eigth and fifth statements are equivalent following Corollary 6 from \cite{HL}.
\end{proof}

\begin{nota}
Let $A = \frac{\Bbbk Q}{J^k}$ be a truncated path algebra with quiver $Q$. If $Q_0 = \{1, \ldots, n\}$ and $1 \leq l \leq k-1$, we denote by $I_l$ the following set of vertices of $Q$
$$I_l = \left\{ i \in Q_0 : \id_{\frac{\Bbbk Q}{J^{2}}} S_i \leq l-1  \hbox{ and } \pd_{\frac{\Bbbk Q}{J^{2}}} S_i \leq k-l-1 \right\}.$$
\end{nota}

\begin{nota}
Let $\mathfrak{N}$ be a square matrix in $M_n({\Bbbk})$ and $I$, $J \subset \{1, \ldots, n\}$. We denote by $\mathfrak{N}_{I,J}$ the matrix in $M_{n-|I|\times n-|J|}(\Bbbk)$ obtained from $\mathfrak{N}$ removing the rows from $I$ and the columns from $J$. 
\end{nota}

\begin{obs}
If $A = \frac{\Bbbk Q}{J^k}$ is a truncated path algebra, then the associated matrix of $\bar{\Omega}|_{V_1 \oplus V_2 \oplus \ldots \oplus V_{k-1}}$ is
$$ \left( \begin{array}{ccccc} 
0 & \ldots & \ldots & 0 & \left(\mathfrak{M}_Q^{k-1}\right)_{I_1,I_{k-1}} \\
\vdots &  & \iddots & \left(\mathfrak{M}_Q^{k-2}\right)_{I_2,I_{k-2}} & 0 \\
\vdots & \iddots & \iddots & \iddots & \vdots \\
0 & \left(\mathfrak{M}_Q^2\right)_{I_{k-2},I_{2}} & \iddots &  & \vdots \\
\left(\mathfrak{M}_Q^1\right)_{I_{k-1},I_{1}} & 0 & \ldots & \ldots & 0 \\
\end{array} \right),$$
where $V_l = \left\langle \left\{ \left[M_v^l(A)\right]\right\}_{v\in Q_0\setminus I_l} \right\rangle$ and $\mathfrak{M}_Q$ is the adjacency matrix of the quiver $Q$.
\end{obs}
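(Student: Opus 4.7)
The plan is to compute the matrix of $\bar{\Omega}|_{V_1\oplus\cdots\oplus V_{k-1}}$ on the basis $\bigcup_{l=1}^{k-1}\{[M_v^l(A)]:v\in Q_0\setminus I_l\}$ in two stages. The first stage fixes the antidiagonal shape of the matrix. The explicit syzygy formula recalled in the previous remark gives
$$\bar{\Omega}([M_v^{m}(A)])=\sum_{\substack{\rho\colon\,\start(\rho)=v\\ \length(\rho)=k-m}}[M_{\target(\rho)}^{k-m}(A)],$$
so $\bar{\Omega}$ sends $V_m$ into $V_{k-m}$. Consequently the block at position $(i,j)$ vanishes unless $i+j=k$, which accounts for all the displayed zero entries.

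The second stage identifies each antidiagonal block. At position $(l,k-l)$, the block represents $\bar{\Omega}\colon V_{k-l}\to V_l$. For $v\in Q_0\setminus I_{k-l}$, the formula above shows that the coefficient of $[M_w^{l}(A)]$ in $\bar{\Omega}([M_v^{k-l}(A)])$ is the number of paths from $v$ to $w$ of the appropriate length, which is read off as an entry of the corresponding power of $\mathfrak{M}_Q$ using the convention fixed in \S2 (namely $T(e_i)=\sum_j|\{\alpha\colon i\to j\}|e_j$ coincides with the matrix of $\bar{\Omega}|_{K_1}$). The relations defining $K_0(A)$ force $[M_w^{l}(A)]=0$ whenever $w\in I_l$, since then $M_w^l(A)$ either fails to exist (equivalently $\id_{\Bbbk Q/J^{2}}S_w\leq l-1$) or is projective (equivalently $\pd_{\Bbbk Q/J^{2}}S_w\leq k-l-1$); hence the rows indexed by $I_l$ are deleted. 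Similarly, restricting $v$ to $Q_0\setminus I_{k-l}$ deletes the columns indexed by $I_{k-l}$, producing the claimed submatrix of a power of $\mathfrak{M}_Q$.

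The computation itself is essentially routine; the only things to handle carefully are (i) keeping the source/target convention for $\mathfrak{M}_Q$ consistent with the convention from \S2 so that the correct power of the adjacency matrix appears in the right orientation, and (ii) verifying that $I_l$ indeed parametrises precisely the vanishing classes $[M_v^l(A)]=0$ in $K_0(A)$, i.e., the union (not the intersection) of the non-existence locus and the projective locus described by the inequalities above.
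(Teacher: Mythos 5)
Your overall strategy is the right one, and it is essentially the paper's own (the remark is stated without proof, as an immediate consequence of the preceding syzygy formula $\Omega(M_v^l(A))=\bigoplus_{\start(\rho)=v,\,\length(\rho)=k-l}M_{\target(\rho)}^{k-l}(A)$): the antidiagonal shape follows because $\bar{\Omega}$ sends $V_m$ into $V_{k-m}$, and the blocks are path-counting matrices with the rows and columns indexed by the $I$'s deleted. Your point (ii) is also a genuine and necessary catch: for the displayed matrix to be correct, $I_l$ must consist of the vertices $v$ for which $M_v^l(A)$ either fails to exist \emph{or} is projective (a union), even though the paper's definition of $I_l$ is written with ``and''.

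However, your second stage contains a concrete orientation error which, followed literally, proves the transpose of the displayed matrix rather than the matrix itself. The convention forced by the display (and consistent with Section 2) is the row-vector one: block row $l$ records the images of the basis $\{[M_v^l(A)]\}_{v\in Q_0\setminus I_l}$, so the block in position $(l,k-l)$ is the matrix of $\bar{\Omega}\colon V_l\to V_{k-l}$, whose $(v,w)$ entry is the number of paths of length $k-l$ from $v$ to $w$; this is what produces $\left(\mathfrak{M}_Q^{k-l}\right)_{I_l,I_{k-l}}$ in that position. You instead assert that the block at position $(l,k-l)$ represents $\bar{\Omega}\colon V_{k-l}\to V_l$. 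As you yourself compute, the entries of that map count paths of length $l$, not $k-l$; so under your identification the block at position $(l,k-l)$ would be (a transpose of) a submatrix of $\mathfrak{M}_Q^{\,l}$, contradicting the display you are trying to verify, where the power $k-l$ sits in block row $l$. Since the entire content of this remark is exactly this bookkeeping --- which power of $\mathfrak{M}_Q$, restricted to which index sets, sits in which block --- the proof as written does not establish the statement. The repair is short: fix the row-as-domain convention explicitly, note that block row $l$ then carries the map $\bar{\Omega}|_{V_l}$, and your own coefficient computation (applied with $l$ in place of $k-l$) gives the claimed blocks; alternatively, keep your column convention and observe that you have computed the transpose, then transpose back. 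But one of these steps must actually be carried out, not deferred to a warning about ``keeping the convention consistent.''
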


\begin{obs}\label{maximo}
For a truncated path algebra $A = \frac{\Bbbk Q}{J^k}$ we have the following equallities
$$\phi(\bigoplus_{l(\rho) \geq 1}\rho A)  = \max_{1\leq l\leq m-1} \left\{ \phi\left(M^l(A)\right)\right\} = \max_{1\leq l\leq m-1}\left\{\phi\left(M^{l}(A) \oplus M^{k-l}(A)\right)\right\}.$$
\end{obs}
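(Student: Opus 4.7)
The plan is to exploit the block anti-diagonal structure of $\bar\Omega$ restricted to $V_1\oplus\cdots\oplus V_{k-1}$ recorded in the preceding remark: $\bar\Omega$ sends $V_l$ into $V_{k-l}$. First I would observe that $M^l_v(A)\cong P_v/\mathrm{rad}^{k-l}(P_v)$ has simple top $S_v$, hence is indecomposable; moreover, distinct pairs $(l,v)$ with $M^l_v(A)\neq 0$ give pairwise non-isomorphic modules (different tops, or different lengths). Therefore in $K_0(A)$ one has $\langle\add M^l(A)\rangle=V_l$, and $\langle\add\bigl(\bigoplus_{l(\rho)\geq 1}\rho A\bigr)\rangle = \bigoplus_{l=1}^{k-1}V_l$ as an internal direct sum.

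Next I would iterate the syzygy formula: for every $s\geq 0$, $\bar\Omega^{s}(V_l)\subseteq V_l$ when $s$ is even and $\bar\Omega^{s}(V_l)\subseteq V_{k-l}$ when $s$ is odd. Since distinct $l\in\{1,\dots,k-1\}$ yield distinct indices $k-l$, the subgroups $\bar\Omega^{s}(V_1),\dots,\bar\Omega^{s}(V_{k-1})$ sit in pairwise distinct $V$-blocks and are linearly independent in $K_0(A)$. Any element of
\[\bar\Omega^{l+s}\Bigl(\bigoplus_j V_j\Bigr) \;=\; \bigoplus_j \bar\Omega^{l+s}(V_j)\]
annihilated by $\bar\Omega$ must have each component annihilated, because $\bar\Omega$ sends these disjoint $V$-blocks into yet other disjoint $V$-blocks. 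Unwinding the definition of $\phi$, this shows that $\bar\Omega$ is a monomorphism on $\bar\Omega^{d+s}\langle\add \bigoplus_j M^j(A)\rangle$ for every $s\geq 0$ if and only if $\phi(M^j(A))\leq d$ for every $j$, giving the first equality $\phi(\bigoplus_{l(\rho)\geq 1}\rho A) = \max_{1\leq l\leq k-1}\phi(M^l(A))$.

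For the second equality, the inequality $\max_l\phi(M^l(A))\leq \max_l\phi(M^l(A)\oplus M^{k-l}(A))$ is immediate from Proposition~\ref{Huard1}(3). For the reverse, the very same separation argument applied to the $\bar\Omega$-stable subgroup $V_l\oplus V_{k-l}$ gives $\phi(M^l(A)\oplus M^{k-l}(A))=\max\{\phi(M^l(A)),\phi(M^{k-l}(A))\}$; taking the maximum over $l$ then yields equality.

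The main technical point is verifying that the kernel of $\bar\Omega$ on $\bar\Omega^{l+s}\bigl(\bigoplus_j V_j\bigr)$ splits along the $V_j$-decomposition; everything else is bookkeeping. This splitting rests on the indecomposability of the modules $M^l_v(A)$ together with the explicit syzygy formula, which together guarantee that the relevant subgroups remain independent both before and after applying $\bar\Omega$.
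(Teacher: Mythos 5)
Your proof is correct, and it is essentially the justification the paper intends: the statement appears there as an unproved remark, resting on the block anti-diagonal structure of $\bar{\Omega}|_{V_1 \oplus \cdots \oplus V_{k-1}}$ recorded in the preceding remark (note $m$ in the statement is a typo for $k$), and your kernel-splitting argument across the $V$-blocks, together with the observation that the involution $l \mapsto k-l$ permutes them, is precisely what makes that implicit reasoning rigorous. One small nitpick: distinct pairs $(l,v)$ with $M^l_v(A) \neq 0$ need not give non-isomorphic modules when both are projective (then $M^l_v \cong M^{l'}_v \cong P_v$), but this is harmless since projective summands have zero class in $K_0(A)$, so the identifications $\langle \add M^l(A) \rangle = V_l$ and the directness of $\bigoplus_{l=1}^{k-1} V_l$ are unaffected.
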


\begin{obs} \label{commutative}
If $\mathfrak{N} = \mathfrak{M}^{t} \in M_n(\Bbbk)$, then $\rk \mathfrak{M}^s = \rk \mathfrak{N}^s$ for all $s \in \N$.
\end{obs}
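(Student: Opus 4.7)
The plan is to reduce the claim to two elementary facts about matrices: that transposition is an antihomomorphism of the matrix ring, and that rank is invariant under transposition. Concretely, I would first observe that since $(AB)^{t} = B^{t}A^{t}$ for any pair of matrices, an easy induction on $s$ gives $(\mathfrak{M}^{t})^{s} = (\mathfrak{M}^{s})^{t}$ for every $s \in \N$ (the case $s=0$ being trivial, as both sides are $\mathrm{Id}_{n}$). Thus
\[
\mathfrak{N}^{s} \;=\; (\mathfrak{M}^{t})^{s} \;=\; (\mathfrak{M}^{s})^{t}.
\]

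Next, I would invoke the well-known equality $\rk B = \rk B^{t}$, which holds over any field since the column rank and row rank of $B$ coincide (both equal the rank of $B$ viewed as a linear map). Applying this with $B = \mathfrak{M}^{s}$ gives
\[
\rk \mathfrak{N}^{s} \;=\; \rk (\mathfrak{M}^{s})^{t} \;=\; \rk \mathfrak{M}^{s},
\]
which is the desired identity.

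There is essentially no obstacle here; the remark is a straightforward observation assembled from two standard linear-algebra facts, and the only thing to check carefully is that transposition commutes with powers (up to the identity $(\mathfrak{M}^{t})^{s} = (\mathfrak{M}^{s})^{t}$), which the induction above settles in one line.
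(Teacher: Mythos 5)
Your proof is correct, and it is exactly the standard argument: the paper states this as a remark without any proof at all, implicitly taking for granted the two facts you spell out, namely that $(\mathfrak{M}^{t})^{s} = (\mathfrak{M}^{s})^{t}$ and that rank is invariant under transposition over a field. Nothing further is needed.
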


For a radical square zero algebras $A$ it is known that the $\phi$-dimension is symmetric, i.e., $\fidim(A) = \fidim(A^{\op})$, see \cite{LMM}. The following result shows that the same is true for truncated path algebras.

\begin{teo}\label{simetria}

Let $A = \frac{\Bbbk Q}{J^k}$ and $A^{\op}= \frac{\Bbbk Q^{\op}}{J^k}$ be truncated path algebras, then
$$\fidim (A) = \fidim (A^{\op}).$$
\begin{proof}

By Remark \ref{maximo} $\fidim (A) = \max\{ \phi_A( M^l(A) \oplus M^{k-l}(A))\}$ for a truncated path algebra $A$.

The matrices $ [\bar{\Omega}_A|_{V_l \oplus V_{k-l}}]$ and $[\bar{\Omega}_{A^{\op}}|_{V_l \oplus V_{k-l}}]$ are
$$\begin{array}{ccc}
[\bar{\Omega}_A|_{V_l \oplus V_{k-l}}] = \left( \begin{array}{cc}
0 & \mathfrak{M} \\
\mathfrak{N} & 0
\end{array} \right), & &
[\bar{\Omega}_{A^{\op}}|_{V_l \oplus V_{k-l}}] = \left( \begin{array}{cc}
0 & \mathfrak{M}^t \\
\mathfrak{N}^t & 0
\end{array} \right) \end{array}.$$

If we change the order of the basis then
$$ [\bar{\Omega}_{A^{\op}}|_{V_{k-l} \oplus V_{l}}] = \left( \begin{array}{cc}
0 & \mathfrak{N}^t \\
\mathfrak{M}^t & 0
\end{array} \right).$$

By Remark \ref{commutative}: $\phi_A(M^l(A) \oplus M^{k-l}(A)) = \phi_{A^{\op}}(M^l(A^{\op}) \oplus M^{k-l}(A^{\op}))$, 
and the thesis follows from Remark \ref{maximo}.
\end{proof}

\end{teo}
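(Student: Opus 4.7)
The plan is to reduce, through Corollary \ref{calculo de la phi} and Remark \ref{maximo}, the symmetry statement to a rank comparison between the block matrices encoding $\bar{\Omega}$ that the preceding remark has already produced. First I would dispatch the self-injective case: by Proposition \ref{autoinyectiva}, $A$ is self-injective exactly when $Q \cong C^n$, and since $Q^{\op}$ is then also a cycle, both $\fidim(A)$ and $\fidim(A^{\op})$ vanish. In the remaining non self-injective case, Corollary \ref{calculo de la phi} combined with Remark \ref{maximo} gives $\fidim(A) = 1 + \max_{1 \leq l \leq k-1} \phi_A(M^l(A) \oplus M^{k-l}(A))$, so it is enough to prove $\phi_A(M^l(A) \oplus M^{k-l}(A)) = \phi_{A^{\op}}(M^l(A^{\op}) \oplus M^{k-l}(A^{\op}))$ for each $l$.

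Next, I would recast $\phi$ in purely linear-algebraic terms. Since $\langle \add(M^l(A) \oplus M^{k-l}(A))\rangle = V_l \oplus V_{k-l}$ and the syzygy formula $\Omega(M_v^l) = \bigoplus_\rho M_{\target(\rho)}^{k-l}$ makes this subgroup $\bar{\Omega}$-invariant, a standard Fitting-type argument for a finite-dimensional endomorphism $T$ of a vector space $W$ identifies the smallest $s$ with $T|_{T^s W}$ injective as the smallest $s$ with $\rk T^s = \rk T^{s+1}$. Hence $\phi_A(M^l(A) \oplus M^{k-l}(A))$ equals the stabilization index of the rank sequence of $B := [\bar{\Omega}_A|_{V_l \oplus V_{k-l}}]$, and analogously for $A^{\op}$ with $B' := [\bar{\Omega}_{A^{\op}}|_{V_l \oplus V_{k-l}}]$.

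Finally, I would exploit the block anti-diagonal shape supplied by the preceding remark: $B = \bigl(\begin{smallmatrix} 0 & \mathfrak{M} \\ \mathfrak{N} & 0 \end{smallmatrix}\bigr)$ and $B' = \bigl(\begin{smallmatrix} 0 & \mathfrak{M}^t \\ \mathfrak{N}^t & 0 \end{smallmatrix}\bigr)$. Even powers of $B$ are block diagonal with blocks $(\mathfrak{M}\mathfrak{N})^j$ and $(\mathfrak{N}\mathfrak{M})^j$, while even powers of $B'$ have blocks $((\mathfrak{N}\mathfrak{M})^t)^j$ and $((\mathfrak{M}\mathfrak{N})^t)^j$; an analogous parity analysis produces $\mathfrak{M}(\mathfrak{N}\mathfrak{M})^j$ and its transposed counterpart for odd powers. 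Remark \ref{commutative} then yields $\rk B^s = \rk (B')^s$ for every $s \in \N$, so the two rank sequences stabilize at the same index, giving the sought equality of $\phi$-values; taking the maximum over $l$ and adding one concludes the proof. The main obstacle is the identification of $\phi$ with the rank-stabilization index of the restricted syzygy operator, which relies crucially on the $\bar{\Omega}$-invariance of $V_l \oplus V_{k-l}$; once that bridge is in place, the rest is bookkeeping with block matrices and transposition.
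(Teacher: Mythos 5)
Your proposal is correct and follows essentially the same route as the paper: reduce via Corollary \ref{calculo de la phi} and Remark \ref{maximo} to comparing $\phi_A(M^l(A)\oplus M^{k-l}(A))$ with $\phi_{A^{\op}}(M^l(A^{\op})\oplus M^{k-l}(A^{\op}))$, then compare ranks of powers of the block anti-diagonal matrices using invariance of rank under transposition (Remark \ref{commutative}). The only differences are minor: where the paper reorders the basis to exhibit $[\bar{\Omega}_{A^{\op}}|_{V_{k-l}\oplus V_l}]$ as the transpose of $[\bar{\Omega}_{A}|_{V_l\oplus V_{k-l}}]$ and applies Remark \ref{commutative} once to the whole matrix, you compute even and odd powers blockwise and transpose block by block; you also spell out two points the paper leaves implicit, namely the self-injective case (where Corollary \ref{calculo de la phi} does not apply) and the Fitting-type identification of $\phi$ with the rank-stabilization index of $\bar{\Omega}$ on the $\bar{\Omega}$-invariant subgroup $V_l\oplus V_{k-l}$.
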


\begin{teo}\label{phi maximas}

Let $Q$ be a quiver with $|Q_0| = n$. If $A = \frac{\Bbbk Q}{J^k}$ is a truncated path algebra, then there exist a truncated path algebra $B = \frac{\Bbbk \bar{Q}}{J^k}$ such that $\fidim(A) \leq \fidim(B)$, where $\bar{Q}$ is a quiver with $|\bar{Q}| = n$, such that $Q$ is a subquiver of $\bar{Q}$ and it has neither sources nor sinks. 

\begin{proof}

Suppose that the quiver $Q$ has at least one source. Let $Q^1$ be the quiver formed by adding one loop in each vertex of $Q$ wich is a source. If $B^1 = \frac{\Bbbk Q^1}{J^k}$, then $M^l(A)\oplus M^{k-l}(A)$ is a direct summand of $M^l(B^1)\oplus M^{k-l}(B^1)$ as a $B^1$-module and $\bar{\Omega}_A|_{V_l (A) \oplus V_{k-l}(A)} = \bar{\Omega}_{B^1}|_{V_l(A) \oplus V_{k-l}(A)}$. Therefore $\phi_A(M^l(A)\oplus M^{k-l}(A)) = \phi_{B^1}(M^l(A)\oplus M^{k-l}(A)) \leq \phi_{B^1}(M^{l}(B^1)\oplus M^{k-l}(B^1))$, and finally $\fidim (A) \leq \fidim(B^1)$.

If $Q^1$ has no sinks then $B=B^1$. Complementary, if $Q^1$ has no sources but at least has one sink, then $\left(Q^1\right)^{\op}$ has no sinks but has at least one source and $\fidim (B^1) = \fidim \left(\left(B^1\right)^{\op}\right)$ by Theorem \ref{simetria} and the result follows by the previous case.
\end{proof}

\end{teo}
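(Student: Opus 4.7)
The plan is to enlarge $Q$ in two stages: first attach loops to kill sources, then invoke the symmetry of Theorem \ref{simetria} to deal with sinks.

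Stage 1, eliminating sources. Form $Q^1$ from $Q$ by attaching one loop at every source of $Q$, and set $B^1 = \Bbbk Q^1/J^k$ on the same vertex set $Q_0$. Since $Q\subseteq Q^1$, there is a natural surjection $B^1\twoheadrightarrow A$, and every $A$-module pulls back to a $B^1$-module. The key compatibility I would verify is that for every vertex $v$ that is not a source of $Q$, the $B^1$-module $M^l_v(B^1)$ coincides with the $A$-module $M^l_v(A)$; this holds because any path in $Q^1$ starting at such a $v$ uses a new loop only if it passes through a former source $w\neq v$, but no path in $Q^1$ can reach $w$ since the only new arrows are loops based at former sources and $Q$ itself has no arrow into $w$. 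Consequently $M^l(A)\oplus M^{k-l}(A)$ is a direct summand of $M^l(B^1)\oplus M^{k-l}(B^1)$ in $\mod B^1$, and $\bar{\Omega}_{B^1}$ restricted to $V_l(A)\oplus V_{k-l}(A)$ agrees with $\bar{\Omega}_A$. By Proposition \ref{it1}(3) this yields
$$\phi_A(M^l(A)\oplus M^{k-l}(A))=\phi_{B^1}(M^l(A)\oplus M^{k-l}(A))\leq \phi_{B^1}(M^l(B^1)\oplus M^{k-l}(B^1)),$$
and maximizing over $1\leq l\leq k-1$ together with Remark \ref{maximo} (and Corollary \ref{calculo de la phi} when $A$ is not self-injective) produces $\fidim(A)\leq \fidim(B^1)$.

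Stage 2, eliminating sinks. Note that $Q^1$ has no sources: former sources now carry a loop, and vertices that were not sources of $Q$ still have incoming arrows in $Q\subseteq Q^1$. If $Q^1$ also has no sinks, take $\bar{Q}=Q^1$ and stop. Otherwise $(Q^1)^{\op}$ has no sinks but does have sources (the former sinks of $Q^1$), so Stage 1 applied to $(Q^1)^{\op}$ yields a quiver $\bar{Q}^{\op}$ with neither sources nor sinks such that $\fidim((B^1)^{\op})\leq \fidim(\Bbbk\bar{Q}^{\op}/J^k)$. Setting $\bar{B}=\Bbbk\bar{Q}/J^k$ and using Theorem \ref{simetria},
$$\fidim(B^1)=\fidim((B^1)^{\op})\leq \fidim(\Bbbk\bar{Q}^{\op}/J^k)=\fidim(\bar{B}),$$
and chaining with Stage 1 gives the desired $\fidim(A)\leq \fidim(\bar{B})$. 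By construction $Q$ is a subquiver of $\bar{Q}$ on the same vertex set, and $\bar{Q}$ has neither sources nor sinks.

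The main obstacle is the compatibility claim underlying Stage 1, namely that the old modules $M^l_v(A)$ and the new modules $M^l_v(B^1)$ actually agree for $v$ not a source of $Q$, together with the identification of the restriction of $\bar{\Omega}_{B^1}$ with $\bar{\Omega}_A$ on $V_l(A)\oplus V_{k-l}(A)$; once that is pinned down, everything else follows from Proposition \ref{it1}(3), Remark \ref{maximo}, and the opposite-algebra symmetry of Theorem \ref{simetria}.
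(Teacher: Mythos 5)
Your proposal is correct and takes essentially the same route as the paper: add loops at the sources to form $Q^1$, observe that $M^l(A)\oplus M^{k-l}(A)$ is a direct summand of $M^l(B^1)\oplus M^{k-l}(B^1)$ with $\bar{\Omega}_{B^1}$ restricting to $\bar{\Omega}_A$ on $V_l(A)\oplus V_{k-l}(A)$, and then dispose of sinks via Theorem \ref{simetria}. The only difference is that you spell out the verification that paths starting at non-sources of $Q$ avoid the new loops, a detail the paper leaves implicit.
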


\subsection{Truncated path algebras of quivers without sources nor sinks} 
We now relate the $\phi$-dimensions of a truncated path algebra and a radical square zero algebra associated to the same quiver, when the quiver has no sinks nor sources.

In this case the projective and injective dimensions of every simple modules is infinite. Then $I_l$ is empty for every $l$. Therefore the following remarks.

\begin{obs} If $Q$ is a quiver without sources nor sinks and $A = \frac{\Bbbk Q}{J^k}$ is a truncated path algebra, then

$$\bar{\Omega}|_{V_1 \oplus V_2 \oplus \ldots \oplus V_{k-1}} = \left( \begin{array}{ccccc} 
0 & \ldots & \ldots & 0 & \mathfrak{M}_Q^{k-1} \\
\vdots &  & \iddots & \mathfrak{M}_Q^{k-2} & 0 \\
\vdots & \iddots & \iddots & \iddots & \vdots \\
0 & \mathfrak{M}_Q^2 & \iddots &  & \vdots \\
\mathfrak{M}_Q^1 & 0 & \ldots & \ldots & 0 \\
\end{array} \right). $$

\end{obs}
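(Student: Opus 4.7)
The statement is a direct specialization of the earlier general remark describing $\bar{\Omega}|_{V_1\oplus\cdots\oplus V_{k-1}}$ whose $(k-s,s)$-block is $\left(\mathfrak{M}_Q^{s}\right)_{I_{k-s},I_{s}}$. My plan is therefore to show that the hypothesis "$Q$ has no sources and no sinks" forces every index set $I_l$ to be empty, and then to quote the general remark.

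First, I would recall the definition
\[ I_l = \left\{\, i\in Q_0 : \id_{\frac{\Bbbk Q}{J^{2}}} S_i \leq l-1 \text{ and } \pd_{\frac{\Bbbk Q}{J^{2}}} S_i \leq k-l-1\,\right\}, \]
and reduce the problem to showing that for the radical square zero algebra $B=\frac{\Bbbk Q}{J^2}$ associated to the same quiver, both $\pd_{B}(S_i)$ and $\id_{B}(S_i)$ are infinite for every vertex $i$. Because by Remark \ref{fgd} one has $\Omega_B(S_i)=\bigoplus_{\alpha\colon i\to j} S_j$, an easy induction shows that $\pd_{B}(S_i)$ coincides with the length of the longest path of $Q$ starting at $i$ (or $+\infty$ if no such finite supremum exists). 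Dually, $\id_{B}(S_i)$ equals the length of the longest path ending at $i$.

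Next I would use the hypothesis on $Q$. Since $Q$ is finite and has no sinks, any path starting at $i$ can be prolonged indefinitely; by finiteness it must eventually revisit a vertex, producing an oriented cycle in $Q$, so $\pd_{B}(S_i)=\infty$. Symmetrically, since $Q$ has no sources, $\id_{B}(S_i)=\infty$. Consequently, for any $1\leq l\leq k-1$, no vertex $i$ satisfies the inequalities defining $I_l$, so $I_l=\emptyset$. As a by-product, every $M_v^l(A)$ exists (because $\id_B(S_v)\geq l$) and none of them is projective (because $\pd_B(S_v)>k-l-1$), which justifies that the bases $\{[M_v^l(A)]\}_{v\in Q_0}$ of the $V_l$ have no missing classes.

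Finally, I would substitute $I_l=\emptyset$ into the general block form of $\bar{\Omega}|_{V_1\oplus\cdots\oplus V_{k-1}}$. Each block $(\mathfrak{M}_Q^{s})_{I_{k-s},I_{s}}$ becomes the full matrix $\mathfrak{M}_Q^{s}$, yielding exactly the antidiagonal matrix of the statement. The only subtlety is the standing justification that $\pd_B$ and $\id_B$ are governed by longest paths in $Q$; once this is stated cleanly, the rest is a bookkeeping specialization of the previous remark.
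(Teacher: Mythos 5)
Your proof is correct and follows essentially the same route as the paper, which disposes of this remark in the two sentences preceding it: since $Q$ has neither sources nor sinks, $\pd_{\frac{\Bbbk Q}{J^2}} S_i = \id_{\frac{\Bbbk Q}{J^2}} S_i = \infty$ for every vertex $i$, hence $I_l = \emptyset$ for every $l$, and the general block formula for $\bar{\Omega}|_{V_1 \oplus \cdots \oplus V_{k-1}}$ specializes to the stated antidiagonal matrix. Your extra justification that these dimensions are governed by longest paths starting (respectively ending) at $i$ is a correct elaboration of the same step, not a different argument.
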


\begin{obs}\label{sizigia sin pozos ni fuentes}

If $Q$ is a quiver without sources nor sinks and $A = \frac{\Bbbk Q}{J^k}$ is a truncated path algebra, then

\begin{itemize}

\item $\bar{\Omega}^{2s}|_{V_1 \oplus V_2 \oplus \ldots \oplus V_{k-1}} = \left( \begin{array}{ccccc} 
\mathfrak{M}_Q^{sk}  & 0 & \ldots & \ldots & 0\\
0 & \mathfrak{M}_Q^{sk}  & \ddots &  & \vdots \\
\vdots & \ddots & \ddots & \ddots & \vdots \\
\vdots  &  & \ddots & \mathfrak{M}_Q^{sk}  & 0 \\
0 & \ldots & \ldots & 0 & \mathfrak{M}_Q^{sk}   \\
\end{array} \right),$

\item $\bar{\Omega}^{2s+1}|_{V_1 \oplus V_2 \oplus \ldots \oplus V_{k-1}} = \left( \begin{array}{ccccc} 
0 & \ldots & \ldots & 0 & \mathfrak{M}_Q^{sk+k-1} \\
\vdots &  & \iddots & \mathfrak{M}_Q^{sk+k-2} & 0 \\
\vdots & \iddots & \iddots & \iddots & \vdots \\
0 & \mathfrak{M}_Q^{sk+2} & \iddots &  & \vdots \\
\mathfrak{M}_Q^{sk+1} & 0 & \ldots & \ldots & 0 \\
\end{array} \right).$

\end{itemize}

\end{obs}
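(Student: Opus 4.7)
The plan is to compute the block matrix powers directly starting from the explicit block form of $B := \bar{\Omega}|_{V_1 \oplus V_2 \oplus \ldots \oplus V_{k-1}}$ given in the preceding remark. Writing $B$ as a $(k-1) \times (k-1)$ array of blocks whose $(i,j)$-block is $\mathfrak{M}_Q^{j}$ if $i+j = k$ and $0$ otherwise, the block product
$$(B^2)_{i,j} = \sum_{l=1}^{k-1} B_{i,l}\, B_{l,j}$$
forces a nonzero contribution only when both $i+l = k$ and $l+j=k$, i.e.\ $i = j$ and $l = k-i$. The crucial observation is that every block that appears is a power of the single matrix $\mathfrak{M}_Q$, so they commute, giving $(B^2)_{i,i} = \mathfrak{M}_Q^{k-i}\mathfrak{M}_Q^{i} = \mathfrak{M}_Q^{k}$. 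Hence $B^2$ is block-diagonal with $\mathfrak{M}_Q^{k}$ in each diagonal block.

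A routine induction on $s$ then yields $B^{2s} = \mathrm{diag}(\mathfrak{M}_Q^{sk},\ldots,\mathfrak{M}_Q^{sk})$, using $B^{2(s+1)} = B^{2s}\cdot B^{2}$ and the fact that the product of two block-diagonal matrices whose blocks are powers of $\mathfrak{M}_Q$ is again block-diagonal with the powers added. This settles the first item of the remark.

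For the second item, write $B^{2s+1} = B^{2s}\cdot B$ and compute block-by-block: because $B^{2s}$ is block-diagonal, only the term $l = i$ survives in $(B^{2s+1})_{i,j} = \sum_{l} (B^{2s})_{i,l}\, B_{l,j}$, giving $(B^{2s+1})_{i,j} = \mathfrak{M}_Q^{sk}\, B_{i,j}$. This is nonzero exactly when $j = k-i$, in which case it equals $\mathfrak{M}_Q^{sk}\, \mathfrak{M}_Q^{k-i} = \mathfrak{M}_Q^{sk+k-i}$. Reading off rows $i = 1, 2, \ldots, k-1$ reproduces precisely the anti-diagonal block matrix claimed in the remark.

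Since the whole argument is a block-matrix computation, no real obstacle arises beyond careful index bookkeeping; the structural point that makes everything collapse so cleanly is the commutativity of the powers $\mathfrak{M}_Q^{a}$ and $\mathfrak{M}_Q^{b}$, which allows the anti-diagonal block form of $B$ to square to a block-diagonal matrix with a single repeated power of $\mathfrak{M}_Q$.
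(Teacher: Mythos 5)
Your proof is correct and is essentially the paper's own (implicit) argument: the remark is stated without proof precisely because it follows from the block anti-diagonal form of $\bar{\Omega}|_{V_1\oplus\cdots\oplus V_{k-1}}$ in the preceding remark by taking powers of that block matrix, exactly as you do. Your index bookkeeping checks out, e.g.\ $(B^{2s+1})_{i,k-i}=\mathfrak{M}_Q^{sk}\,\mathfrak{M}_Q^{k-i}=\mathfrak{M}_Q^{sk+k-i}$ reproduces the displayed anti-diagonal blocks $\mathfrak{M}_Q^{sk+k-1},\ldots,\mathfrak{M}_Q^{sk+1}$.
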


\begin{teo}\label{phi sin pozos ni fuentes}
Let $A=\frac{\Bbbk Q}{J^k}$ be a non self-injective truncated path algebra. If $Q$ has no sources nor sinks, then
$$ \fidim (A) = \left\{
\begin{array}{ll}
2\left(\frac{\fidim(\frac{\Bbbk Q}{J^2})-1}{k}\right)+1 & \text{if } \fidim(\frac{\Bbbk Q}{J^2}) - 1 \equiv0\pmod{k}, \\ 
2\left(\frac{\fidim(\frac{\Bbbk Q}{J^2})-2}{k}\right) + 2 & \text{if } \fidim(\frac{\Bbbk Q}{J^2}) - 1 \equiv 1\pmod{k}, \\
2\left\lceil\frac{\fidim(\frac{\Bbbk Q}{J^2})-2}{k}\right\rceil + 1  & \text{if }  \fidim(\frac{\Bbbk Q}{J^2}) - 1 \not\equiv 0,1\pmod{k}. \end{array} \right.$$

\begin{proof}

It follows from Remark \ref{sizigia sin pozos ni fuentes} and Corollary \ref{calculo de la phi}.\end{proof}

\end{teo}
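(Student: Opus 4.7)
The plan is to combine Corollary \ref{calculo de la phi} with the block description of $\bar{\Omega}_A$ given in Remark \ref{sizigia sin pozos ni fuentes}. By Corollary \ref{calculo de la phi}, $\fidim(A) = 1 + \phi_A(M)$ where $M := \bigoplus_{l(\rho)\geq 1} \rho A$. Since $Q$ has no sources nor sinks, $I_l = \emptyset$ for every $1 \leq l \leq k-1$, so $M$ has summands generating all of $V_1 \oplus \cdots \oplus V_{k-1}$, and $\bar{\Omega}_A$ acts on this subgroup through the block antidiagonal matrix of the remark. Computing $\phi_A(M)$ therefore reduces to finding the smallest index at which the rank sequence of the iterated syzygy restricted to $V_1 \oplus \cdots \oplus V_{k-1}$ stabilizes.

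Setting $r_m := \rk(\mathfrak{M}_Q^m)$ and invoking the explicit formulas for $\bar{\Omega}_A^{2s}$ and $\bar{\Omega}_A^{2s+1}$ from Remark \ref{sizigia sin pozos ni fuentes} yields
\[ R_{2s} := \rk\bigl(\bar{\Omega}_A^{2s}|_{V_1\oplus\cdots\oplus V_{k-1}}\bigr) = (k-1)\,r_{sk}, \qquad R_{2s+1} = \sum_{l=1}^{k-1} r_{sk+l}. \]
The standard fact that $r_m$ is strictly decreasing and then constant (once two consecutive ranks coincide, the ascending kernel filtration of $\mathfrak{M}_Q$ has stabilized, forcing equality from then on) identifies its stabilization index as $\delta := \fidim(\Bbbk Q/J^2) - 1$. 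To see this I would apply Corollary \ref{calculo de la phi} to the (also non self-injective) algebra $\Bbbk Q/J^2$, using that $\bar{\Omega}_{\Bbbk Q/J^2}$ acts on $K_1(\Bbbk Q/J^2)$ precisely as $\mathfrak{M}_Q$.

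Because $r_{sk} \geq r_{sk+1} \geq \cdots \geq r_{(s+1)k}$, the equation $R_{2s} = R_{2s+1}$ is equivalent to $r_{sk} = r_{sk+1} = \cdots = r_{sk+k-1}$, which by monotonicity holds if and only if $sk \geq \delta$; similarly $R_{2s+1} = R_{2s+2}$ is equivalent to $sk+1 \geq \delta$. Hence $\phi_A(M)$ is the smallest $N$ past which \emph{both} conditions hold for every subsequent $s$. Writing $\delta = tk + j$ with $0 \leq j \leq k-1$, a short case analysis completes the computation: for $j=0$ both conditions are simultaneously satisfied from $s=t$, giving $\phi_A(M) = 2t$; for $j=1$ the even-index condition requires $s\geq t+1$ while the odd-index one only needs $s\geq t$, and stabilization first occurs at $N = 2t+1$; for $2 \leq j \leq k-1$ both conditions first hold at $s = t+1$, giving $\phi_A(M) = 2(t+1)$. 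Adding $1$ and rewriting $t$ in terms of $\delta = \fidim(\Bbbk Q/J^2) - 1$ recovers the three formulas in the statement.

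The main obstacle is purely bookkeeping. One must verify carefully that $R_N = R_{N+1}$ indeed forces the pointwise equalities of the intervening $r_m$'s (which reduces to the strict monotonicity of $r_m$ before stabilization) and that the three residues of $\delta$ modulo $k$ translate correctly into the three ceiling expressions in the statement, the nontrivial identity being $\lceil(\delta-1)/k\rceil = \lceil \delta/k \rceil = t+1$ whenever $\delta \not\equiv 0,1 \pmod{k}$.
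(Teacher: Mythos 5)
Your proof is correct and follows exactly the route the paper takes: the paper's proof is the one-line citation of Remark \ref{sizigia sin pozos ni fuentes} and Corollary \ref{calculo de la phi}, and your argument is precisely the expansion of that citation, reducing $\phi_A\bigl(\bigoplus_{l(\rho)\geq 1}\rho A\bigr)$ to the stabilization index of the rank sequence of the block matrices and identifying that index with $\fidim(\Bbbk Q/J^2)-1$ via the $k=2$ case. The case analysis on $\delta \bmod k$ checks out against all three formulas, so your write-up supplies correctly the bookkeeping the paper omits.
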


The above theorem suggests the next definition.
\begin{defi}
Let $k$ be a natural such that $k\geq2$. We define $f_k:\N\rightarrow\N$ as
\[
f_k(m)=
\left\{
\begin{array}{ll}
0 & \text{if}\ m=0,\\
2\left(\frac{m-1}{k}\right)+1 & \text{if}\ m\equiv1\pmod{k},\\
2\left(\frac{m-2}{k}\right)+2 & \text{if}\ m\equiv2\pmod{k},\\
2\left\lceil\frac{m-2}{k}\right\rceil+1 & \text{otherwise}.
\end{array}
\right.
\]
\end{defi}

\begin{obs}
Note that $f_k$ is an increasing function.
\end{obs}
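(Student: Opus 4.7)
The plan is to reparametrize $m$ via the division algorithm and collapse each clause of $f_k$ to a single affine expression in the quotient; once this is done, checking $f_k(m+1)\ge f_k(m)$ becomes a short table comparison. Writing $m=qk+r$ with $0\le r<k$, for $k\ge 3$ the clause $m\equiv 1\pmod k$ gives $f_k(m)=2q+1$; the clause $m\equiv 2\pmod k$ gives $f_k(m)=2q+2$; and the ``otherwise'' clause, which covers $r=0$ (with $m>0$) together with $r\in\{3,\ldots,k-1\}$, is evaluated by noting that $\lceil q+(r-2)/k\rceil$ equals $q$ when $r=0$ and equals $q+1$ when $r\ge 3$, giving $f_k(m)=2q+1$ in the first subcase and $f_k(m)=2q+3$ in the second. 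The case $k=2$ is degenerate: the ``otherwise'' branch is empty, and one checks directly that $f_2(m)=m$.

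With this normal form in hand, monotonicity becomes a case check on the residue $r$ of $m$. When $r\in\{0,1,\ldots,k-2\}$ the step $m\mapsto m+1$ leaves $q$ fixed and replaces $r$ by $r+1$, and the table above shows that the value either stays the same or increases by $1$. The only transition that also changes $q$ is $r=k-1\mapsto r=0$ with $q\mapsto q+1$: for $k=3$ this sends $2q+2$ to $2(q+1)+1=2q+3$, and for $k\ge 4$ it sends $2q+3$ to $2(q+1)+1=2q+3$. Either way $f_k(m+1)\ge f_k(m)$, and the base case $f_k(0)=0\le 1=f_k(1)$ is immediate.

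The main obstacle is really just the bookkeeping in the ``otherwise'' clause, since the ceiling $\lceil q+(r-2)/k\rceil$ takes two different values ($q$ when $r=0$ and $q+1$ when $r\ge 3$), producing a jump from $2q+1$ at $r=0$ to $2q+3$ at $r\ge 3$ inside a single branch of the definition. Once one observes that this jump sits exactly between $r=2$ (value $2q+2$) and the wrap-around $r=k-1\mapsto r=0$ (where the increment $q\mapsto q+1$ keeps the value constant at $2q+3$), the comparison is routine.
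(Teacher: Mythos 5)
Your proof is correct: the normal form $f_k(qk+r)=2q+1,\,2q+1,\,2q+2,\,2q+3$ for $r=0\ (m>0),\,1,\,2,\,r\geq 3$ respectively (valid for $k\geq 3$, with $f_2(m)=m$ handled separately) is accurate, and the step-by-step comparison including the wrap-around $r=k-1\mapsto 0$ covers all cases. The paper states this remark without any proof, so there is nothing to compare against; your division-algorithm case check is exactly the routine verification the authors left to the reader.
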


\subsection{$\phi$-dimensions for truncated path algebras}
The results from the previous section allows us to give bounds on the $\phi$-dimension of arbitrary truncated path algebras.

The next result follows from Theorem \ref{phi maximas}, Theorem \ref{phi sin pozos ni fuentes} and the fact that $\phi$-dimension of a radical square zero algebra
with $n$ vertices is less or equal to $n$ (Corollary 4.17 \cite{LMM}).

\begin{coro}\label{boundcoro}

If $A = \frac{\Bbbk Q}{J^k}$ is a truncated path algebra with $|Q_0| = n$, then
$$\fidim (A) \leq f_k(n).$$
Also there exists some algebra $A$ such that the bound is reached.

\end{coro}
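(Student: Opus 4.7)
The plan is to reduce the bound to the radical square zero case via the structural results already proved in this section. Given $A=\frac{\Bbbk Q}{J^k}$ with $|Q_0|=n$, I would first apply Theorem \ref{phi maximas} to obtain a quiver $\bar Q$ on the same vertex set, containing $Q$, and having neither sources nor sinks, such that $\fidim(A)\le\fidim(B)$ where $B=\frac{\Bbbk\bar Q}{J^k}$. This removes the sink/source obstruction in one blow, so only the case of a quiver without sources nor sinks needs to be analyzed.

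Next I would split on whether $B$ is self-injective. If it is, Theorem \ref{Phi = 0} gives $\fidim(B)=0\le f_k(n)$ (note $f_k(0)=0$, and in general $f_k$ takes values in $\N$). Otherwise, Theorem \ref{phi sin pozos ni fuentes} applies: a direct comparison of its three cases with the three non-trivial clauses in the definition of $f_k$ shows that they match exactly after setting $m=\fidim(\frac{\Bbbk\bar Q}{J^2})$, giving
\[
\fidim(B)=f_k\!\left(\fidim\!\left(\tfrac{\Bbbk\bar Q}{J^2}\right)\right).
\]
By Corollary 4.17 of \cite{LMM} one has $\fidim(\frac{\Bbbk\bar Q}{J^2})\le n$, and the monotonicity of $f_k$ recorded in the remark immediately before the statement then yields $\fidim(A)\le\fidim(B)\le f_k(n)$.

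For the sharpness claim I would use the construction of Section~3, which for every $n$ produces a strongly connected quiver $Q$ on $n$ vertices (hence without sources nor sinks) with $\fidim(\frac{\Bbbk Q}{J^2})=n$. For such $Q$ the adjacency matrix is similar to $M_\lambda$ with $\lambda\ge 2$, whose Jordan form has a nilpotent block of size $n-1$ together with a $1\times 1$ block of eigenvalue $\lambda$; since the adjacency matrix of $C^n$ is a permutation matrix with $n$ distinct eigenvalues (the $n$-th roots of unity), $Q\neq C^n$. By the equivalence $(2)\Leftrightarrow(5)$ in Proposition \ref{autoinyectiva}, $A=\frac{\Bbbk Q}{J^k}$ is not self-injective, so Theorem \ref{phi sin pozos ni fuentes} gives $\fidim(A)=f_k(n)$.

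The only real obstacle I foresee is the bookkeeping in Step~2: verifying that the three congruence cases in Theorem \ref{phi sin pozos ni fuentes} (expressed in terms of $\fidim(\frac{\Bbbk Q}{J^2})-1 \bmod k$) translate cleanly into the three cases defining $f_k$ (expressed in terms of $m\bmod k$). This is a routine re-indexing but is the one place where a careless shift would break the identification $\fidim(B)=f_k(\fidim(\frac{\Bbbk\bar Q}{J^2}))$, on which both the bound and its attainment rest.
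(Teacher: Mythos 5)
Your proposal is correct and follows essentially the same route as the paper, whose one-line proof invokes exactly the ingredients you use: Theorem \ref{phi maximas} to pass to a quiver without sources or sinks, Theorem \ref{phi sin pozos ni fuentes} to identify $\fidim(B)=f_k\bigl(\fidim(\frac{\Bbbk \bar Q}{J^2})\bigr)$, the bound $\fidim(\frac{\Bbbk \bar Q}{J^2})\le n$ from Corollary 4.17 of \cite{LMM} together with monotonicity of $f_k$, and the Section 3 construction for attainment. Your extra care (handling the self-injective case of $B$ separately, and checking $Q\neq C^n$ so that Theorem \ref{phi sin pozos ni fuentes} applies to the extremal example) only fills in details the paper leaves implicit.
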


\begin{prop}

Given $n \in \N$ and $k \geq 2$ then there exist a family of truncated path algebras $\left\{A_l : \ l \in \left\{0,1, \ldots, f_k(n)\right\} \right\}$ such that $\fidim(A_l) = l$. 

\begin{proof}

For $l = 0$ consider $A_0 = \frac{\Bbbk C^n}{J^k}$. And for $l = f_k(n)$ consider $A_l = \frac{\Bbbk Q}{J^k}$, where $Q$ verifies $\fidim\left(\frac{\Bbbk Q}{J^2}\right) = n$.

For the case $k = 2$ and the rest of the possible values of $l$ see Proposition 4.60 of \cite{LMM}. 

Now, let $k$ be greater or equal to $3$. Recall the quiver $\Gamma^m$ from Example \ref{ejemplo_todas_las_dimensiones}.
\begin{itemize}

\item For $l = 1$, consider $A_l = \frac{\Bbbk \Gamma^2}{J^k}$.

\item For $l$ even, consider $A_l = \frac{\Bbbk \Gamma^{l'}}{J^k}$, where $l' = \frac{k(l-2)}{2}+2$.

\item For $l$ odd, $l > 1$, consider $A_l = \frac{\Bbbk \Gamma^{l'}}{J^k}$, where $l' = \frac{k(l-3)}{2}+3$.

\end{itemize}

\end{proof}

\end{prop}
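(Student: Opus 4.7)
The plan is to treat the proposition case by case, specifying $A_l$ explicitly for each regime of $l$. For the two extreme values I would reuse machinery already developed in the paper: for $l = 0$ take $A_0 = \Bbbk C^n / J^k$, which is self-injective by Proposition \ref{autoinyectiva} and hence has $\phi$-dimension zero; for $l = f_k(n)$ take $A_{f_k(n)} = \Bbbk Q / J^k$ where $Q$ is a quiver on $n$ vertices with $\fidim(\Bbbk Q/J^2) = n$ produced by the construction of Section 3, and conclude $\fidim(A_{f_k(n)}) = f_k(n)$ from Theorem \ref{phi sin pozos ni fuentes}. The case $k = 2$ is already treated in \cite{LMM}, so I would dispose of it by citation.

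For the main case, $k \geq 3$ and $1 \leq l \leq f_k(n) - 1$, the natural candidates are the algebras $\Bbbk \Gamma^m / J^k$ attached to the quivers $\Gamma^m$ from Example \ref{ejemplo_todas_las_dimensiones}. These quivers have neither sources nor sinks and satisfy $\fidim(\Bbbk \Gamma^m / J^2) = m - 1$, so Theorem \ref{phi sin pozos ni fuentes} applies and gives
$$\fidim(\Bbbk \Gamma^m / J^k) = f_k(m - 1).$$
Hence the task reduces to showing that every target value $l \in \{1, \ldots, f_k(n) - 1\}$ is attained as $f_k(m - 1)$ for some admissible $m$. I would first verify, by inspecting the three branches of the definition of $f_k$, that the successive increments $f_k(j+1) - f_k(j)$ always lie in $\{0, 1\}$; together with $f_k(0) = 0$ this already implies that $f_k$ is surjective onto $[0, f_k(n)] \cap \N$. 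Then I would exhibit explicit choices of $m$: for $l = 2s + 1$ odd, take $m = sk + 2$, so that $m - 1 \equiv 1 \pmod k$ and the first branch yields $f_k(m - 1) = 2s + 1$; for $l = 2s$ even with $s \geq 1$, take $m = (s - 1)k + 3$, so that $m - 1 \equiv 2 \pmod k$ and the second branch yields $f_k(m - 1) = 2s$.

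The main obstacle is not conceptual but combinatorial: one must patiently run through the three branches of $f_k$ to verify both the unit-increment monotonicity claim and that the explicit formulas for $m$ indeed land in the intended branch and produce the expected value. Once this bookkeeping is carried out, the construction itself is an immediate consequence of Theorem \ref{phi sin pozos ni fuentes}, so the whole proof reduces to packaging these computations cleanly and checking that the small edge cases $l = 1$ and $l$ near $f_k(n)$ agree with the extremal constructions used at the endpoints.
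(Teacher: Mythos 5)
Your construction is correct, and it follows the paper's own strategy step for step: the same algebras at the endpoints $l=0$ and $l=f_k(n)$, the same citation of \cite{LMM} for $k=2$, and for $k\geq 3$ the same family $\frac{\Bbbk\Gamma^m}{J^k}$ evaluated through Theorem \ref{phi sin pozos ni fuentes}. The one place where you diverge from the paper is exactly the place where the paper slips. Since $\fidim\left(\frac{\Bbbk\Gamma^{m}}{J^2}\right)=m-1$ (Example \ref{ejemplo_todas_las_dimensiones}), the algebra $\frac{\Bbbk\Gamma^{m}}{J^k}$ has $\phi$-dimension $f_k(m-1)$. The paper's printed indices, $l'=\frac{k(l-2)}{2}+2$ for $l$ even and $l'=\frac{k(l-3)}{2}+3$ for $l$ odd, $l>1$, put $l'-1$ in the residue classes $1$ and $2$ modulo $k$ respectively, and a direct evaluation of the corresponding branches gives $f_k(l'-1)=l-1$ in both cases; that is, the paper's formulas are off by one, apparently from conflating the superscript $m$ of $\Gamma^m$ with the $\phi$-dimension $m-1$ (only its $l=1$ case, $\Gamma^2$, is right). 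Your choices $m=sk+2$ for $l=2s+1$ and $m=(s-1)k+3$ for $l=2s$ yield $f_k(m-1)=l$ exactly, so your version is the corrected one; after re-indexing, the paper's family still realizes every value in $\{0,\ldots,f_k(n)\}$, so the proposition itself is safe, but as printed its proof does not establish $\fidim(A_l)=l$. Two minor remarks: your preliminary observation that the increments $f_k(j+1)-f_k(j)$ lie in $\{0,1\}$ is true but is made redundant by your explicit formulas; and, like the paper, you do not verify that the chosen index satisfies $m\leq n$ so that $\Gamma^m$ is available --- this does hold for every $l\leq f_k(n)-1$ by a short check of the branches of $f_k$, and is in any case harmless since the statement does not constrain the number of vertices of the $A_l$.
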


\section{Truncated path algebras with small $\phi$-dimension}

From Corollary \ref{boundcoro}, if $k$ is big in comparison with $|Q_0|$ the $\phi$-dimension of $\frac{\Bbbk Q}{J^k}$ is small. Specifically:

\begin{obs}\label{el_chico_que_me_gusta}
Let $A = \frac{\Bbbk Q}{J^k}$ be a truncated path algebra with $|Q_0| = n$.

\begin{itemize}

\item If $Q$ has neither sources nor sinks and $\fidim \frac{\Bbbk Q}{J^2} = 1$, then $\fidim A = 1$.

\item If $k \geq n$, $Q$ has neither sources nor sinks, and $\fidim \frac{\Bbbk Q}{J^2} = 2$, then
$$\fidim A = 2.$$

\item If $k \geq n-1\geq 2$, then $\fidim (A) \leq 3$.  

\end{itemize}
\end{obs}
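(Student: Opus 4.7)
The plan is a direct case-by-case verification using the tools already assembled: for (1) and (2) I will apply the explicit formula of Theorem \ref{phi sin pozos ni fuentes}, and for (3) I will combine the bound of Corollary \ref{boundcoro} with the monotonicity of $f_k$.

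The shared preparation for (1) and (2) is to observe that $A$ is non-self-injective, which is needed in order to invoke Theorem \ref{phi sin pozos ni fuentes}. Since $\fidim(\frac{\Bbbk Q}{J^2}) \geq 1$, Proposition \ref{autoinyectiva} applied to $\frac{\Bbbk Q}{J^2}$ rules out $Q = C^n$; applying the same proposition to $A = \frac{\Bbbk Q}{J^k}$ then shows that $A$ itself is not self-injective, so the formula of Theorem \ref{phi sin pozos ni fuentes} is available.

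For (1), I will set $m = \fidim(\frac{\Bbbk Q}{J^2}) = 1$ in that formula: $m-1 = 0 \equiv 0 \pmod{k}$, so the first case gives $\fidim A = 2\cdot 0 + 1 = 1$. For (2), I set $m = 2$: then $m-1 = 1$, and since $k \geq 2$ we have $1 \equiv 1 \pmod k$ and $1 \not\equiv 0 \pmod k$, so the second case gives $\fidim A = 2 \cdot 0 + 2 = 2$. For (3), I start from Corollary \ref{boundcoro}, namely $\fidim A \leq f_k(n)$; the hypothesis $k \geq n-1$ rewrites as $n \leq k+1$, and, using that $f_k$ is increasing, it suffices to compute $f_k(k+1)$. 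Since $k+1 \equiv 1 \pmod{k}$, the corresponding branch of $f_k$ yields $f_k(k+1) = 2\cdot\frac{k+1-1}{k}+1 = 3$, which closes the bound.

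There is no substantial obstacle: the three items reduce to short residue calculations once Theorem \ref{phi sin pozos ni fuentes}, Corollary \ref{boundcoro}, and the monotonicity of $f_k$ are in hand. The only subtlety to watch is the non-self-injectivity check in (1) and (2) before invoking Theorem \ref{phi sin pozos ni fuentes}, which is handled via Proposition \ref{autoinyectiva} as above.
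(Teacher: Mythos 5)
Your proposal is correct and follows essentially the paper's own route: the paper states this remark as an immediate consequence of Corollary \ref{boundcoro} (hence of Theorem \ref{phi sin pozos ni fuentes} and the monotone function $f_k$), and your residue computations $f_k(1)=1$, $f_k(2)=2$, $f_k(k+1)=3$ are exactly the details being elided. Your explicit verification via Proposition \ref{autoinyectiva} that $Q\neq C^n$ forces $A$ to be non-self-injective is a careful touch the paper leaves implicit, but it is the same argument, not a different one.
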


The following example shows that there exists truncated path algebras with maximal $\phi$-dimension such that its asociated quiver has neither sources nor sinks, unlike the case of radical square zero algebras. See Corollary 4.40 of \cite{LMM}.

\begin{ej}

Let $\Gamma$ be a quiver with $\Gamma_0 = \{v_1 \ldots, v_m\}$ where $m \geq 3$ and $\fidim \frac{\Bbbk \Gamma}{J^2} = m$. Consider the quiver $Q$ with

\begin{itemize}

\item set of vertices $Q_0 = \Gamma_0 \cup \left\{w_1 \ldots, w_{n-m+1}\right\}$,

\item set of arrows $Q_1 = \Gamma_1 \cup \left\{ \alpha_m, \ldots, \alpha_{n-1}\right\}$,

\end{itemize} 
where $\start(\alpha_i) = w_{i+1}$, $\target(\alpha_i) = w_i$ and $\Gamma_0 \cap \left\{w_1 \ldots, w_{n-m+1}\right\} = \left\{w_{i_0}\right\}$ with $1< i_0 <n-m+1 $. Then $Q$ has source $w_1$ and sink $w_{n-m+1}$, and $A = \frac{\Bbbk Q}{J^n}$ has $\fidim (A) = 3$. By Remark \ref{el_chico_que_me_gusta} we see that the $\phi$-dimension of $A$ is maximal.

\end{ej}

Finally we give a characterization of truncated path algebras with $\phi$-dimension equal to $1$. 

\begin{teo}

Let $A = \frac{\Bbbk Q}{J^{k}}$ be a truncated algebra, then the following statements are equivalent.

\begin{enumerate}

\item $\fidim (A) = 1$.

\item The quiver $Q\neq C^n$ and
\begin{itemize}
\item $J^k = 0$, or
\item $\det(\mathfrak{M}_{\bar{Q}}) \neq 0$, where $\bar{Q}$ is the quiver obtained from $Q$ by deleting its sources and sinks.
\end{itemize}
\end{enumerate}

\begin{proof}
An easy computation proves that the second statement implies the first one.

Assume now that the first statement is true, then there are two possible cases.
\begin{itemize}
\item $Q$ has neither sinks nor sources.\\
Then, by Theorem \ref{phi sin pozos ni fuentes}, $\fidim (A) = 1$ if and only if $\fidim \left(\frac{\Bbbk Q}{J^2}\right) = 1$ if and only if $\det(\mathfrak{M}_Q) \neq 0$ and $Q \neq C^n$.
\item $Q$ has sinks or sources.\\
If every path in $Q$ has length less than $k$, then $J^k = 0$ hence $\fidim (A) = \gd (A) = 1$, because $A$ is a hereditary algebra. Suppose there is a path with length bigger or equal to $k$. This is equivalent to the existence of a non-projective module $M_v^l$. Consider $\bar{Q}$ the full subquiver of $Q$ generated by $\left\{v \in Q_0 \text{ such that } [M_v^l] \neq 0 \text{ for some l }\in \{1,\ldots, k-1\}\right\}$. Consider the truncated algebra $B = \frac{\Bbbk \bar{Q}}{J^k}$.  Since $\bar{\Omega}|_{K_1(A)} = \bar{\Omega}|_{K_1(B)}$ then $\bar{\Omega}|_{K_1(B)}$ is injective and it follows that $\bar{Q}$ has no sinks nor sources and $\det(\mathfrak{M}_{\bar{Q}}) \neq 0$.\\
We show now that the quiver $\bar{Q}$ is obtained from $Q$ by deleting every source and sink.
Consider $v_1, v_2, v_3 \in Q_0 $ and $\rho, \gamma \in Q_1$ such that $\rho : v_1 \rightarrow v_2$, $\gamma : v_2 \rightarrow v_3$. If $v_3 \in \bar{Q}_0$ ($ v_1 \in \bar{Q}_0 $) then $v_2 \in \bar{Q}_0$, because the module $M_{v_2}^1$ is not projective ($M_{v_2}^{k-1}$ is not projective).   
\end{itemize}
\end{proof}

\end{teo}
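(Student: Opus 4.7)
The plan is to prove both directions; the nontrivial work is in $(1) \Rightarrow (2)$, which I split on whether $Q$ has boundary vertices and whether $J^k$ vanishes.

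For $(2) \Rightarrow (1)$: if $J^k = 0$, then $Q$ is acyclic (any oriented cycle produces paths of unbounded length), so $A \cong \Bbbk Q$ is hereditary, and $Q \neq C^n$ together with the implicit non-semisimplicity of $A$ gives $\fidim(A) = \gd(A) = 1$. If instead $\det(\mathfrak{M}_{\bar{Q}}) \neq 0$ with $\bar{Q}$ sink/source-free, Proposition \ref{autoinyectiva} gives $\fidim(A) \geq 1$. Setting $B = \Bbbk\bar{Q}/J^k$, the basis elements $[M^l_v]$ of $K_1(A)$ for $v \in \bar{Q}_0$ match those of $K_1(B)$, and the syzygy formulas of the paper's remark coincide on them. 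Since $\bar{Q}$ has no sources or sinks, Remark \ref{sizigia sin pozos ni fuentes} reduces injectivity of $\bar{\Omega}|_{K_1(B)}$ to nonsingularity of powers of $\mathfrak{M}_{\bar{Q}}$, which $\det(\mathfrak{M}_{\bar{Q}}) \neq 0$ supplies. Corollary \ref{calculo de la phi} then gives $\fidim(A) = 1$.

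For $(1) \Rightarrow (2)$: Theorem \ref{Phi = 0} and Proposition \ref{autoinyectiva} force $Q \neq C^n$. If $Q$ has no sources or sinks, Theorem \ref{phi sin pozos ni fuentes} applies and, since the piecewise formula gives $f_k(m) \geq 2$ for $m \geq 2$ so that $f_k(m) = 1$ only when $m = 1$, we obtain $\fidim(\Bbbk Q/J^2) = 1$; this is equivalent, for the sink/source-free radical square zero case with $Q \neq C^n$, to $\det(\mathfrak{M}_Q) \neq 0$, and here $\bar{Q} = Q$. If $Q$ has boundary vertices and $J^k = 0$, the first branch of $(2)$ holds. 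Otherwise, I would define $\bar{Q}$ as the full subquiver on $\{v : [M^l_v] \neq 0 \text{ for some } l\}$, show that $\bar{Q}$ coincides with $Q$ minus its sources and sinks, and then use the identification $K_1(A) \cong K_1(B)$ (with $B = \Bbbk\bar{Q}/J^k$) together with the injectivity of $\bar{\Omega}|_{K_1(A)}$ (inherited from $\phi\bigl(\bigoplus_{l(\rho)\geq 1}\rho A\bigr) = 0$ via Corollary \ref{calculo de la phi}) to conclude $\det(\mathfrak{M}_{\bar{Q}}) \neq 0$ via Remark \ref{sizigia sin pozos ni fuentes}.

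The main obstacle is the identification $\bar{Q} = Q \setminus (\text{sources} \cup \text{sinks})$. Excluding sources and sinks of $Q$ from $\bar{Q}$ is immediate from the existence and projectivity criteria for $M^l_v$. The reverse inclusion is delicate: $[M^l_v] \neq 0$ requires $v$ strictly interior to a path of length $\geq k$, and the hypothesis $J^k \neq 0$ only guarantees such a path exists somewhere in $Q$, not through every non-boundary vertex. The propagation argument --- given an arrow $\gamma : v_2 \to v_3$ with $v_3 \in \bar{Q}_0$ and any incoming arrow $\rho$ at $v_2$, one prepends $\rho$ and $\gamma$ to the suffix of the long path through $v_3$ to produce a length-$\geq k$ path with $v_2$ interior --- requires a careful choice of which long path to use so that enough length remains after the position of $v_3$. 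Getting this propagation to reach every non-boundary vertex, using the symmetric backward version and connectedness of $Q$, is where the bulk of the technical work sits.
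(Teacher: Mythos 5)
Your outline reproduces the paper's strategy---split on whether $Q$ has sources or sinks, handle $J^k=0$ by heredity, and otherwise compare $A$ with $B=\frac{\Bbbk \bar{Q}}{J^k}$ where $\bar{Q}$ is the support quiver of $K_1(A)$---and your $(2)\Rightarrow(1)$ sketch is essentially fine (note that $\det(\mathfrak{M}_{\bar{Q}})\neq 0$ already forces $\bar{Q}$ to have no sinks nor sources, since a sink or source gives a zero row or column). But in $(1)\Rightarrow(2)$ you stop exactly at the step that carries all the content: the identification of the support quiver with $Q$ minus its sources and sinks, which you defer as ``where the bulk of the technical work sits.'' That is a genuine gap, not a deferred technicality. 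The idea you are missing, and which the paper's proof supplies, is to exploit the injectivity of $\bar{\Omega}|_{K_1(A)}$ (obtained from Corollary \ref{calculo de la phi}) \emph{before} propagating: injectivity first forces the support quiver $\bar{Q}$ to have no sinks nor sources, and once this is known every vertex of $\bar{Q}_0$ admits incoming and outgoing paths of arbitrary length staying inside $\bar{Q}$. Then no ``careful choice of which long path to use'' is needed: given arrows $\rho: v_1\to v_2$ and $\gamma: v_2\to v_3$ with $v_3\in\bar{Q}_0$, compose $\gamma$ with a path of length $k-2$ leaving $v_3$ inside $\bar{Q}$; this shows $M^1_{v_2}$ exists and is not projective, so $v_2\in\bar{Q}_0$, and dually $v_1\in\bar{Q}_0$ gives $[M^{k-1}_{v_2}]\neq 0$.

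Worse, the completion you plan---reaching \emph{every} non-boundary vertex of $Q$ by propagation plus connectedness---cannot work, because that claim is false. Take $k=4$ and $Q$ with vertices $a,v,b,c$ and arrows $a\to v$, $v\to b$, $a\to c$, and a loop at $c$. The nonzero classes in $K_1(A)$ are exactly $[M^1_c],[M^2_c],[M^3_c]$, which $\bar{\Omega}$ permutes, so $\phi\bigl(\bigoplus_{\length(\rho)\geq 1}\rho A\bigr)=0$ and $\fidim(A)=1$ by Corollary \ref{calculo de la phi}; yet $v$ is neither a source nor a sink of $Q$ and $[M^l_v]=0$ for every $l$, since every path leaving $v$ has length $1\leq k-2$, making $M^1_v$ projective. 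Propagation can never reach $v$ (its only neighbours are the source $a$ and the sink $b$), and indeed the one-step deletion quiver on $\{v,c\}$ has singular adjacency matrix. So the support quiver must be identified with the \emph{iterated} deletion of sources and sinks (delete, then delete the newly created sources and sinks, and so on), which is also the only reading under which statement $(2)$ of the theorem is correct; your proposed identification with the single-step deletion---which is also what the paper's proof asserts, read literally---is unattainable. Any correct completion of your argument has to prove the iterated identification, and the no-sinks-no-sources property of the support quiver described above is the tool that makes each propagation step work.
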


\end{document}